%%%%%%%%%%%%%%%%%%%%%%%%%%%%%%%%%%%%%%%%%%%%%%%%%%%%%%%%%%%%%%%%%%%%%%%%%%%%%%%%
\documentclass[11pt]{amsart}

\usepackage{amsmath}
\usepackage{amssymb}
\usepackage{graphics,graphicx}
\usepackage{enumerate}
\usepackage{color,colordvi}
%\usepackage{refcheck}
%\usepackage[initials]{amsrefs}
%\usepackage{fancyhdr}

%%%%%%%%%%%%%%%%%%%%%%%%%%%%%%%%%%%%%%%%%%%%%%%%%%%%%%%%%%%%%%%%%%%%%%%%%%%%%%%%
\newtheorem{theorem}{Theorem}[section]
\newtheorem{corollary}[theorem]{Corollary}

\newtheorem{proposition}[theorem]{Proposition}

\theoremstyle{definition}

\newtheorem{example}[theorem]{Example}
\newtheorem{remark}[theorem]{Remark}
\newtheorem{remarks}[theorem]{Remarks}

\newcommand{\D}{\mathrm{d}}
\newcommand{\e}{\mathrm{e}}

\newcommand{\N}{\mathbb{N}}
\newcommand{\R}{\mathbb{R}}
\newcommand{\Z}{\mathbb{Z}}
\newcommand{\BB}{\mathcal{B}}

\newcommand{\OO}{\mathcal{O}}
\newcommand{\RR}{\mathcal{R}}
\newcommand{\pd}{\partial}
\newcommand{\eps}{\varepsilon}

\numberwithin{equation}{section}
%%%%%%%%%%%%%%%%%%%%%%%%%%%%%%%%%%%%%%%%%%%%%%%%%%%%%%%%%%%%%%%%%%%%%%%%%%%%%%%%

\title[Local perturbations of leaky curves]{Discrete spectrum from local perturbations of leaky curves}

\dedicatory{To Israel Sigal on the occasion of his 80th birthday}

\author[P.~Exner]{Pavel Exner}

\address[P.~Exner]{Doppler Institute for Mathematical Physics and Applied Mathematics\\
Czech Technical University\\
B\v rehov\'a 7\\ 11519 Prague\\ Czechia\\ and Department of
Theoretical Physics\\ NPI\\ Academy of Sciences\\ 25068 \v{R}e\v{z}
near Prague, Czechia}
\email{{\tt exner@ujf.cas.cz}}

\keywords{singular Schr\"odinger operators, geometrically induced discrete spectrum}

\subjclass[2020]{81Q37, 35J10, 34L40}

%\date{\today}

\begin{document}

%\begin{shortdedication}
%\emph{To my friend Fritz Gestztesy on the occasion of his 70th birthday}.
%\end{shortdedication}

\begin{abstract}
We discuss spectrum of a class of singular Schr\"odinger operator models known as leaky curves and show that if the interaction support has a periodic shape, its local perturbations can give rise to a discrete spectrum below the continuum threshold even if they are of `zero mean'.
\end{abstract}

\maketitle

%%%%%%%%%%%%%%%%%%%%%%
\section{Introduction}
\label{s:intr}

Being aware that main interests of the man we are celebrating are focused on many-body systems, I choose the topic of this paper with the hope that he would agree with the phrase I heard many years ago from the great physicist Mahito Kohmoto: `Surprisingly, there are still interesting effects in one-body quantum mechanics'. The system I am going to discuss is simple indeed; its Hamiltonian is a singular Schr\"odinger operator in $L^2(\R^2)$ formally given by
 % ------------- %
 \begin{equation} \label{formal}
H_{\alpha,\Gamma} = -\Delta-\alpha\delta(x-\Gamma), \quad \alpha>0,
 \end{equation}
 % ------------- %
where $\Gamma$ is a sufficiently regular curve. It is well known that the geometry of $\Gamma$ induces an effective interaction. For instance, if the curve is straight asymptotically but not globally, operator \eqref{formal} typically has at least one eigenvalue below the continuum starting at $-\frac14\alpha^2$ \cite{EI01}.

In the present paper we deal with local perturbations of \emph{periodic} curves. We prove sufficient conditions under which they give rise to eigenvalues at the bottom of the spectrum. Not surprisingly, it happens if the perturbation represents a local contraction, but we are going to show that this is possible even in the `critical' case when the perturbation is of zero mean.

%%%%%%%%%%%%%%%%%%%%%%%%%
\section{Preliminaries}
\label{s:prelim}

Let us now describe the model in more precise terms. We suppose that the curve $\Gamma:\R\to\R^2$ is Lipschitz and piecewise $C^1$; with an abuse of notation we use the same symbol for the map and its image. The formal expression \eqref{formal} can be then naturally given meaning using the quadratic form
 % ------------- %
 \begin{equation} \label{Hform}
q_{\alpha,\Gamma}: q_{\alpha,\Gamma}[\psi] = \int_{\R^2} |\nabla\psi(x)|^2\,\D x - \alpha \int_\R |\psi(\Gamma(s))|^2\,\D s,
 \end{equation}
 % ------------- %
where $s$ is the arc length of $\Gamma$. The domain of \eqref{Hform} is $H^1(\R^2)$ and the form is closed and bounded from below, cf.~\cite[Sec.~2]{BEKS94} or \cite[Sec.~2.1]{Ex08}, being thus uniquely associated with a self-adjoint operator for which we use again the symbol $H_{\alpha,\Gamma}$.

The unperturbed system will correspond to a periodic curve. For simplicity we restrict ourselves to the case when $\Gamma$ is a graph of a function; without loss of generality we may fix the Cartesian coordinates and assume that
 % ------------- %
 \begin{enumerate}[(i)]
 \setcounter{enumi}{0}
 \setlength{\itemsep}{1.5pt}
 \item $\Gamma_0(s)=(x_1,x_2=\gamma(x_1))$ with a Lipschitz function $\gamma:\R\to\R$, piecewise $C^1$ and such that $\gamma(x_1+a)=\gamma(x_1)$ for some $a>0$, where $x_1=x_1(s)$ is the inverse function to $s:\:s(x_1)= \int^{x_1}_0 \sqrt{1+\gamma'(\xi)^2}\,\D\xi$. We exclude the trivial case when $\gamma$ is a constant function. \label{ai}
  \end{enumerate}
 % ------------- %

 %----------------%
\begin{proposition} \label{prop:ess}
Under the assumption \eqref{ai}, the spectrum of operator $H_{\alpha,\Gamma_0}$ is purely essential,
 % ------------- %
 $$ %\begin{equation} \label{H_0_spec}
\sigma\big(H_{\alpha,\Gamma_0}\big) = \sigma_\mathrm{ess}\big(H_{\alpha,\Gamma_0}\big) \supset \R_+
 $$ %\end{equation}
 % ------------- %
with $\inf \sigma\big(H_{\alpha,\Gamma_0}\big)<0$; the negative part is absolutely continuous.
\end{proposition}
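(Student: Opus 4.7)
The plan is to exploit the $a$-periodicity in $x_1$ by a Floquet--Bloch (Gelfand) decomposition. Let $\Omega := (0,a)\times\R$ and, for $\theta\in[-\pi/a,\pi/a]$, let $H_{\alpha,\Gamma_0}(\theta)$ be the self-adjoint operator on $L^2(\Omega)$ associated with the form \eqref{Hform} restricted to $H^1(\Omega)$-functions with quasi-periodic conditions $\psi(a,\cdot)=\e^{i\theta a}\psi(0,\cdot)$, the $\delta$-interaction now being supported on the single period $\Gamma_0\cap\overline\Omega$. The standard Gelfand transform then yields
$$
H_{\alpha,\Gamma_0}\;\cong\;\int^\oplus_{[-\pi/a,\pi/a]} H_{\alpha,\Gamma_0}(\theta)\,\frac{a\,\D\theta}{2\pi}.
$$

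First I would identify the fiber essential spectrum. The quasi-periodic Laplacian on $\Omega$ has purely a.c.\ spectrum $[\theta^2,\infty)$ via Fourier series in $x_1$ and the full-line Laplacian in $x_2$. Since $\Gamma_0\cap\overline\Omega$ is a compact arc, the trace map $H^1(\Omega)\to L^2(\Gamma_0\cap\overline\Omega)$ is compact, hence the interaction form is a form-compact perturbation of $-\Delta$ on $\Omega$, cf.~\cite{BEKS94}. Weyl's theorem gives $\sigma_\mathrm{ess}(H_{\alpha,\Gamma_0}(\theta))=[\theta^2,\infty)$ for each $\theta$, and the union over $\theta$ yields $\R_+\subset\sigma(H_{\alpha,\Gamma_0})$.

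Next, to show $\inf\sigma<0$, I would use in the $\theta=0$ fiber the trial function $\psi(x)=\e^{-\kappa|x_2-\gamma(x_1)|}$, which is $a$-periodic in $x_1$ and lies in $H^1(\Omega)$ because $\gamma$ is Lipschitz. A direct change of variables $t=x_2-\gamma(x_1)$ gives
$$
\frac{q_{\alpha,\Gamma_0}[\psi]}{\|\psi\|^2} \;=\; \frac{\kappa\,(\kappa L-\alpha\ell)}{a},\qquad \ell:=\int_0^a\!\sqrt{1+\gamma'^2}\,\D x_1,\quad L:=\int_0^a(1+\gamma'^2)\,\D x_1,
$$
whose minimum in $\kappa$ equals $-\alpha^2\ell^2/(4aL)<0$, proving $\inf\sigma(H_{\alpha,\Gamma_0})<0$.

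Finally, the negative part of the spectrum is the union of finitely many bands $\theta\mapsto\lambda_n(\theta)$, each a real-analytic eigenvalue branch of the analytic (type-B) family $\{H_{\alpha,\Gamma_0}(\theta)\}$ on a fixed form domain (cf.~\cite{Ex08}). Both pure essentiality of $\sigma(H_{\alpha,\Gamma_0})$ and absolute continuity of its negative part are equivalent to the absence of flat bands: any constant $\lambda_n$ would generate an eigenvalue of infinite multiplicity, and outside the flat case each band contributes only a.c.\ spectrum by the usual direct-integral argument. To rule out flat bands I would run a Thomas-type analytic continuation $\theta\mapsto\theta+i\tau$ via conjugation by $\e^{-\tau x_1}$, reduce the fiber eigenvalue equation to a Birman--Schwinger condition on $L^2(\Gamma_0\cap\overline\Omega)$, and show that the corresponding compact, $\tau$-holomorphic Birman--Schwinger operator does not have $1/\alpha$ in its spectrum for $|\tau|$ large enough.

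The main obstacle is this last step: the Floquet decomposition and the trial-function bound are routine, but propagating the Birman--Schwinger compactness of \cite{BEKS94} to the non-self-adjoint complex-rotated fibers, and obtaining uniform invertibility of $1-\alpha R_\lambda(\theta+i\tau)$ on a suitable contour, requires careful resolvent estimates in the moving frame along $\Gamma_0$. Once this is in place, the absence of flat negative bands and hence the stated spectral properties follow from standard periodic-operator machinery.
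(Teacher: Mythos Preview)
Your Floquet setup and the identification of the fiber essential spectrum via form-compactness of the trace map match the paper's argument essentially line for line. Your trial function $\e^{-\kappa|x_2-\gamma(x_1)|}$ differs from the paper's choice $\eta_0^\theta(x_1)\,g(\eps x_2)$ but works equally well and even yields an explicit bound $-\alpha^2\ell^2/(4aL)$; this is a harmless and arguably cleaner variation.

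The substantive divergence is in ruling out flat bands. You reach for a Thomas-type complex rotation and candidly flag the non-self-adjoint Birman--Schwinger estimates as the main obstacle, leaving that step only sketched. The paper bypasses this machinery entirely. Passing to the unitarily equivalent fibers $\tilde H_{\alpha,\Gamma_0}(\theta)=(-i\pd_1+\theta)^2-\pd_2^2-\alpha\delta(x-\Gamma_0)$ with periodic boundary conditions, one obtains a type-A analytic family on a fixed domain. If a negative dispersion curve were constant, the Feynman--Hellmann relation $\epsilon'(\theta)=2(\tilde\psi(\theta),(-i\pd_1+\theta)\tilde\psi(\theta))=0$ for all $\theta$ forces the eigenfunction to be independent of $x_1$, i.e.\ a multiple of $\tilde\eta_0^\theta$ in that variable; but the singular interaction leaves the subspace of $x_1$-independent functions invariant only when $\gamma'\equiv 0$, contradicting the non-triviality hypothesis in \eqref{ai}. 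This short structural argument closes the proof without any analytic continuation or non-self-adjoint resolvent analysis. Your Thomas route would in principle deliver more---absolute continuity of the \emph{entire} spectrum, not just the negative part---but the paper explicitly notes (Remark~\ref{rem:ac}(b)) that this stronger conclusion is not needed here and that the only proof in the literature requires substantially stronger regularity and large-$\alpha$ assumptions. So your proposed path is heavier than necessary and, as you acknowledge, not yet complete; the elementary Feynman--Hellmann observation is the missing shortcut.
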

 %----------------%
\begin{proof}
In view of the singular potential periodicity one can use Floquet decomposition: with an abuse of notation we write
 % ------------- %
 \begin{equation} \label{floquet}
H_{\alpha,\Gamma_0} = \frac{a}{2\pi} \int_{|\pi\theta|\le a}^\oplus H_{\alpha,\Gamma_0}(\theta)\,\D\theta,
 \end{equation}
 % ------------- %
where the right-hand side has to be understood as $UH_{\alpha,\Gamma_0}U^{-1}$ with the unitary $U:\,L^2(\R^2)\to L^2(S_a)\times [-\frac{\pi}{a},\frac{\pi}{a}]$ defined on $C_0^\infty(\R^2)$ by
 % ------------- %
 $$
(U\psi)(x,\theta) = \sum_{m\in\Z} \e^{-im\theta} \psi\Big(x_1+ma, x_2\Big)
 $$
 % ------------- %
and extended by continuity; the elementary cell here is the strip
 % ------------- %
 \begin{equation} \label{strip}
S_a = [-\textstyle{\frac12}a,\textstyle{\frac12}a] \times \R.
 \end{equation}
 % ------------- %
The fiber operators on $L^2(S_a)$ in \eqref{floquet} act as
 % ------------- %
 \begin{equation} \label{fiber}
H_{\alpha,\Gamma_0}(\theta) = -\Delta -\alpha\delta(x-\Gamma(s)),
 \end{equation}
 % ------------- %
their domains consisting of functions from $H^2(S_a)$ satisfying the conditions
 % ------------- %
 \begin{equation} \label{quasiper}
\pd_1^j\psi\big(\textstyle{\frac12}a,x_2\big) = \e^{i\theta a}\, \pd_1^j\psi\big(-\textstyle{\frac12}a,x_2\big), \quad j=0,1\,;
 \end{equation}
 % ------------- %
the associated quadratic form
 % ------------- %
 \begin{equation} \label{form}
q_{\alpha,\Gamma_0}[\psi,\theta] = \|\nabla\psi\|^2_{L^2(S_a)} -\alpha\int_{-a/2}^{a/2} \psi\big(\Gamma(s(\xi))\big)\,\D\xi
 \end{equation}
 % ------------- %
is defined on functions from $H^1(S_a)$ satisfying the condition \eqref{quasiper} with $j=0$. Alternatively, one can consider the fiber operators
 % ------------- %
 \begin{equation} \label{fiber2}
\tilde{H}_{\alpha,\Gamma_0}(\theta) = (-i\pd_1+\theta)^2 -\pd_2^2 -\alpha\delta(x-\Gamma(s)),
 \end{equation}
 % ------------- %
with quasiperiodic conditions \eqref{quasiper} replaced by periodic ones, unitarily equivalent to \eqref{fiber} by means of the operators $V:\, (V\psi) = \e^{-i\theta x_1}\psi(x)$, and similarly for the associated quadratic forms. The advantage of operators \eqref{fiber2} is that their domain is independent of $\theta$ so they form a type A analytic family in the sense of \cite[Sec.~VII.2.1]{Ka76}; their spectral properties following from this fact translate in a straightforward way to those of operators \eqref{fiber}.

It is useful to introduce the orthonormal basis associated with the eigenvalues of the transverse part of operator $H_{0,\Gamma_0}(\theta)$, namely
 % ------------- %
 $$ %\begin{equation} \label{transverse}
\mu_m(\theta) = \Big(\frac{2\pi m}{a}+\theta\Big)^2,\quad \eta_m^\theta(x_1) = \frac{1}{\sqrt{a}}\,\e^{i(2\pi m+\theta a)x_1/a}
 $$ %\end{equation}
 % ------------- %
(or alternatively $\tilde\eta_m^\theta(x_1) = \frac{1}{\sqrt{a}}\,\e^{2\pi imx_1/a}$ referring to $\tilde\mu_m(\theta)= \mu_m(\theta)$ and periodic boundary conditions), allowing us to understand $H_{0,\Gamma_0}(\theta)$ as a matrix differential operator with the entries $(\eta_n^\theta, H_{0,\Gamma_0}(\theta) \eta_m^\theta) = \big( -\pd_2^2 + \mu_m(\theta)\big)\,\delta_{nm}$; for its resolvent kernel we have
 % ------------- %
 $$ %\begin{equation} \label{freefiberres}
(\eta_n^\theta, \big(H_{0,\Gamma_0}(\theta)+\kappa^2\big)^{-1} \eta_m^\theta)(x_1,x'_1) = \frac{\e^{-\kappa|x_1-x'_1|}}{2\kappa}\,\delta_{nm}.
 $$ %\end{equation}
 % ------------- %

Since the part of $\Gamma_0$ in $S_a$ is finite, one can check in the same way as in \cite[Thm.~4.2]{BEKS94} that the singular term in \eqref{fiber2} is relatively compact with respect to $(-i\pd_1+\theta)^2-\pd_2^2$, and consequently,
 % ------------- %
 \begin{equation} \label{specess}
\sigma_\mathrm{ess}\big(H_{\alpha,\Gamma_0}(\theta)\big) = \sigma_\mathrm{ess}\big(H_{0,\Gamma_0}(\theta)\big) = [\theta^2,\infty), \quad \theta\in\BB_a:=\big[-\textstyle{\frac{\pi}{a}},\textstyle{\frac{\pi}{a}}\big].
 \end{equation}
 % ------------- %
At the same time, we have $\inf \sigma\big(H_{\alpha,\Gamma_0}(\theta)\big)<\theta^2$. Indeed, let us choose
 % ------------- %
 $$ %\begin{equation} \label{trial}
\psi_\eps(x) = \eta_0^\theta(x_1)\, g(\eps x_2),
 $$ %\end{equation}
 % ------------- %
where the function $g\in C_0^\infty(\R)$ equals one in the vicinity of zero. Plugging it into \eqref{form}, we get
 % ------------- %
 $$ %\begin{equation} \label{trial2}
q_{\alpha,\Gamma_0}[\psi_\eps] = \theta^2 + \eps\|g'\|^2 -\frac{\alpha}{a}\,\int_{-a/2}^{a/2} |g(\eps\Gamma_0(s(x_1))|^2\,\sqrt{1+\gamma'(x_1)^2}\,\D x_1.
 $$ %\end{equation}
 % ------------- %
In the limit $\eps\to 0$, the second term on the right-hand side tends to zero, while the third one approaches $-\alpha L(\Gamma_0\!\!\upharpoonright_{S_a}) < -\alpha a<0$, where $L(\Gamma_0\!\!\upharpoonright_{S_a})$ is the length of the segment of the curve $\Gamma_0$ in the strip \eqref{strip}, hence $q_{\alpha,\Gamma_0}[\psi_\eps] <  \theta^2$ holds for all $\eps$ small enough.

The spectrum of $H_{\alpha,\Gamma_0}(\theta)$ below $\theta^2$ is thus nonempty and in view of \eqref{specess} it consists of eigenvalues which are real-analytic as functions of the quasimomentum $\theta$. Moreover, since we have excluded the trivial case of a constant $\gamma$, none of the dispersion curves is constant. To see that, assume that a dispersion curve satisfies $\epsilon'(\theta)=0$ in $\BB_a$. By Feynman-Hellmann argument we get for the corresponding eigenfunction $(\tilde\psi(\theta), 2(-i\pd_1+\theta)\tilde\psi(\theta))=0$, however, $\tilde\psi(\theta)$ is nonzero a.e., so that the eigenfunction must satisfy
 % ------------- %
 $$ %\begin{equation} \label{reductio}
(-i\pd_1+\theta)\tilde\psi(x_1,x_2;\theta)=0
 $$ %\end{equation}
 % ------------- %
for a.a. $x_2\in\R$ which can happen only if $\tilde\psi(\cdot,x_2;\theta)$ is a multiple of $\tilde\eta_0^\theta(\cdot)$. The interaction term in the operator \eqref{fiber2}, though, leaves the subspace of functions independent of $x_1$ invariant only if $\gamma'=0$, which contradicts to the assumption we have made.

Finally, the spectrum of $H_{\alpha,\Gamma_0}$ is by \eqref{floquet} union of the fiber operator \eqref{fiber} spectra as $\theta$ runs through $\BB_a$ which concludes the proof.
\end{proof}

 % ------------- %
\begin{remarks} \label{rem:ac}
(a) The lowest eigenvalue $\epsilon_0$ of $H_{\alpha,\Gamma_0}(0)$ is simple and can be chosen positive; its periodic extension is the generalized eigenfunction of $H_{\alpha,\Gamma_0}$ associated with the spectral threshold. \\[.1em]
(b) In fact, the whole spectrum is expected to be absolutely continuous but we will not need this property in the present paper. We mention it here only because the proof existing in the literature \cite{BDE03} requires substantially stronger hypotheses: the $C^4$ smoothness of $\Gamma_0$ and $\alpha$ large enough.
\end{remarks}
 % ------------- %

The question we are going to address concerns spectral effects of local perturbations of $H_{\alpha,\Gamma_0}$ coming from rescaling of the interaction support. Specifically, we ask what happens if $\Gamma_0$ is replaced by the curve
 % ------------- %
 \begin{equation} \label{rescaled}
\Gamma_\tau(s)=(x_1+\tau(x_1),\gamma(x_1))
 \end{equation}
 % ------------- %
assuming that
 % ------------- %
 \begin{enumerate}[(i)]
 \setcounter{enumi}{1}
 \setlength{\itemsep}{1.5pt}
 \item $\tau:\R\to\R$ is a Lipschitz function, piecewise $C^1$ with $\tau'$ compactly supported; we again exclude the trivial case $\tau=0$. \label{aii}
  \end{enumerate}
 % ------------- %
The variable $s$ here is no longer the arc length of the perturbed curve $\Gamma_\tau$, the latter being given by $s_\tau(x_1)= \int^{x_1}_0 \sqrt{(1+\tau'(x_1))^2 + \gamma'(\xi)^2}\,\D\xi$, however, this parametrization difference plays no role as we are going to compare points on the curves through their Euclidean distances in~$\R^2$.

%%%%%%%%%%%%%%%%%%%%%%%%
\section{The singular version of Birman-Schwinger principle}
\label{s:BS}

The core element of spectral analysis of a self-adjoint operator, in our case $H_{\alpha,\Gamma_\tau}$, is the investigation of its resolvent. An efficient tool to identify the discrete spectrum of Schr\"odinger operators is the Birman-Schwinger principle; we refer to \cite{BEG22} for an extensive bibliography on this subject. If the involved potentials are singular, of measure type, a modification is needed.

Let us recall that the result comes from comparison of the resolvent, $R^k_{\alpha,\Gamma}=\big(H_{\alpha,\Gamma}-k^2)^{-1}$ for $k^2\in \rho\big(H_{\alpha,\Gamma}\big)$, with the free one, $R_0^k$, which in the two-dimensional case is an integral operator with the kernel
 % ------------- %
 $$ %\begin{equation} \label{laplkern}
G_k(x-y) = \frac{i}{4}H_0^{(1)}(k|x-y|).
 $$ %\end{equation}
 % ------------- %
Given positive Radon measures $\mu,\nu$ with $\mu(x)=\nu(x)=0$ for any $x\in\R^2$, we define by $R^k_{\nu,\mu}$ the integral operator $L^2(\R^2,\mu)=: L^2(\mu)\to L^2(\nu)$ acting as $R^k_{\nu,\mu}\psi= G_k*\psi\mu$ for all $\psi\in D(R^k_{\nu,\mu})\subset L^2(\mu)$. Specifically, we deal with combinations of the measure $m_\Gamma:\, m_\Gamma(M) = \int_\R \chi_{M\cap\Gamma}(x_1)\,\sqrt{1+\gamma'(x_1)^2}\,\D x_1$ for any Borel $M\subset\R^2$, and the Lebesgue measure $\D x$. The generalized Birman-Schwinger principle is then expressed as follows \cite[Prop.~2.3]{Ex08}:
 %----------------%
\begin{proposition} \label{prop:BS}
(i) Fix a complex $k$ with $\mathrm{Im}\,k>0$. If $\,I-\alpha R^k_{m_\Gamma,m_\Gamma}$ is invertible and the operator
 % ------------- %
 $$ %\begin{equation} \label{resformula}
R^k:= R^k + \alpha R^k_{\D x,m_\Gamma} \big(I-\alpha R^k_{m_\Gamma,m_\Gamma}\big)^{-1} R^k_{m_\Gamma, \D x}
 $$ %\end{equation}
 % ------------- %
on $L^2(\R^2)$ is everywhere defined, then $k^2\in \rho\big(H_{\alpha,\Gamma}\big)$ and $R^k=R^k_{\alpha,\Gamma}$. \\
(ii) $\dim\ker\big(H_{\alpha,\Gamma}-k^2) = \dim\ker\big(I-\alpha R^k_{m_\Gamma,m_\Gamma}\big)$ for any such $k$. \\
(iii) An eigenfunction $\psi\in L^2(\R^2)$ of $H_{\alpha,\Gamma}$ with eigenvalue $k^2$ is associated with an appropriate eigenfunction $\phi\in L^2(m_\Gamma)\backsimeq L^2(\R)$ of $R^k_{m_\Gamma,m_\Gamma}$ with eigenvalue one, being related by $\psi(x)= \int_\R R^k_{\D x,m_\Gamma}(x,s)\,\phi(s)\,\D s$, and on the other hand, $\phi(s)=\psi(\Gamma(s))$.
\end{proposition}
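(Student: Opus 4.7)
The plan is to follow the standard Birman--Schwinger strategy adapted to the measure-valued perturbation. The guiding formal identity is $R^k_{\alpha,\Gamma}=R^k_0+\alpha R^k_0\,\delta_\Gamma\, R^k_{\alpha,\Gamma}$; iterating once and formally summing the geometric series produces the Krein-type expression in (i), the invertibility condition being exactly what makes this summation rigorous.

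\textbf{Step 1 (operator-theoretic setup).} Before anything else, I would verify that the three auxiliary operators $R^k_{m_\Gamma,\D x}:L^2(\R^2)\to L^2(m_\Gamma)$, its adjoint $R^k_{\D x,m_\Gamma}$, and the trace-type operator $R^k_{m_\Gamma,m_\Gamma}$ on $L^2(m_\Gamma)$ are bounded. The logarithmic singularity of the kernel $G_k$ on the diagonal is locally integrable; combined with the Lipschitz regularity of $\Gamma$, which gives uniform control of $m_\Gamma(B(x,r))\lesssim r$, this yields the required $L^2$-boundedness (Schur test or Hilbert--Schmidt estimates on compact pieces, plus exponential decay of $G_k$ away from the diagonal for $\mathrm{Im}\,k>0$ to handle the tails).

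\textbf{Step 2 (proof of (i)).} Under the invertibility hypothesis, I would check directly that the candidate $R^k$ maps $L^2(\R^2)$ into the form domain $H^1(\R^2)$ and satisfies the resolvent equation. Concretely, for $\psi\in L^2(\R^2)$ one writes $R^k\psi=R^k_0\psi+R^k_{\D x,m_\Gamma}\phi$ with $\phi:=\alpha(I-\alpha R^k_{m_\Gamma,m_\Gamma})^{-1}R^k_{m_\Gamma,\D x}\psi\in L^2(m_\Gamma)$, and verifies by a distributional calculation that $(-\Delta-k^2)R^k\psi=\psi+\phi\,m_\Gamma-\alpha(R^k\psi)|_\Gamma\,m_\Gamma$, with the last two terms cancelling precisely by the defining relation for $\phi$. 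Since $R^k$ is bounded and everywhere defined, it must coincide with $R^k_{\alpha,\Gamma}$, and $k^2$ lies in the resolvent set.

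\textbf{Step 3 (proof of (ii) and (iii)).} For the correspondence of eigenfunctions, suppose $H_{\alpha,\Gamma}\psi=k^2\psi$. Then in the distributional sense $(-\Delta-k^2)\psi=\alpha\,\psi|_\Gamma\cdot m_\Gamma$, and convolution with $G_k$ gives $\psi=\alpha R^k_{\D x,m_\Gamma}(\psi|_\Gamma)$; taking the trace on $\Gamma$ yields $\phi:=\psi|_\Gamma\in L^2(m_\Gamma)$ satisfying $(I-\alpha R^k_{m_\Gamma,m_\Gamma})\phi=0$. Conversely, given $\phi\in\ker(I-\alpha R^k_{m_\Gamma,m_\Gamma})$, I would define $\psi:=\alpha R^k_{\D x,m_\Gamma}\phi$, show $\psi\in H^1(\R^2)$ with $\psi|_\Gamma=\phi$, and verify via the quadratic form \eqref{Hform} that $\psi$ is an eigenfunction with eigenvalue $k^2$. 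The injectivity of $\psi\mapsto\phi$ follows because $\phi=0$ forces $(-\Delta-k^2)\psi=0$ in $L^2(\R^2)$ with $\mathrm{Im}\,k>0$, whence $\psi=0$; injectivity of $\phi\mapsto\psi$ is immediate from the reconstruction formula. This gives the dimension identity and the explicit correspondence in (iii).

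\textbf{Main obstacle.} The principal technical point is making rigorous the trace operation: one must show that for every $\phi\in L^2(m_\Gamma)$, the convolution $R^k_0(\phi\cdot m_\Gamma)$ belongs to $H^1(\R^2)$ so that its trace on $\Gamma$ is well-defined and coincides with $R^k_{m_\Gamma,m_\Gamma}\phi$. This is what genuinely requires the measure-theoretic framework of \cite{BEKS94}; once it is in place, the rest of the argument is essentially a careful rewriting of the classical Birman--Schwinger identity. In the present paper I would simply invoke the statement as proved in \cite[Prop.~2.3]{Ex08}, which in turn rests on these estimates.
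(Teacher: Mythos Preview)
Your proposal is correct and in fact more detailed than what the paper does: the paper gives no proof at all, simply citing \cite[Prop.~2.3]{Ex08} (which in turn rests on \cite{BEKS94}), exactly as you suggest in your final paragraph. Your sketch of the Krein-type resolvent identity and the eigenfunction correspondence is the standard argument behind that citation and would serve well as an expanded exposition.
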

 %----------------%

Instead of the Laplacian resolvent sandwiched between square roots of the potential we have in the singular case an integral operator, the kernel of which is the ($\alpha$ multiple of the) trace of the free resolvent kernel at the points of the interaction support. To be concrete, we are interested in the negative spectrum of $H_{\alpha,\Gamma}$ corresponding to $k=i\kappa$ with $\kappa>0$, thus we seek the values of $\kappa$ for which operator $\RR^\kappa_{\alpha,\Gamma}$ on $L^2(\R)$ with the kernel
 % ------------- %
 \begin{equation} \label{BSkern}
\RR^\kappa_{\alpha,\Gamma}(s,s') = \frac{\alpha}{2\pi} K_0\big(\kappa|\Gamma(s)-\Gamma(s')|\big),
 \end{equation}
 % ------------- %
where $K_0$ is the Macdonald function, has eigenvalue one.

 %----------------%
\begin{proposition} \label{prop:BS2}
The operator $\RR^\kappa_{\alpha,\Gamma}$ is positive and decreasing with respect to the spectral parameter, $\RR^\kappa_{\alpha,\Gamma} \ge \RR^{\kappa'}_{\alpha,\Gamma}$ for $\kappa'>\kappa$, and $\lim_{\kappa\to\infty}\|\RR^\kappa_{\alpha,\Gamma}\|=0$. Moreover, $\sup\sigma_{ess}(\RR^{\kappa_0}_{\alpha,\Gamma})=1$ holds for $-\kappa_0^2=\inf\sup\sigma_{ess}(H_{\alpha,\Gamma})$.
\end{proposition}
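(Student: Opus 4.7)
The plan is to address the four assertions in turn. Positivity and monotonicity are kernel-level facts reducing to a Fourier representation of the Macdonald function; the norm decay is a Schur-test estimate; the essential-spectrum equality is the substantive step and will follow from a Weyl-sequence version of the generalized Birman-Schwinger correspondence in Proposition \ref{prop:BS}.

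For positivity and monotonicity I would substitute the identity
\[
K_0(\kappa|x|)=\frac{1}{2\pi}\int_{\R^2}\frac{\e^{ip\cdot x}}{|p|^2+\kappa^2}\,\D p,
\]
i.e.\ the Fourier inversion of the resolvent kernel of $-\Delta+\kappa^2$, into the quadratic form with kernel \eqref{BSkern}. Fubini then recasts it as
\[
\langle\phi,\RR^\kappa_{\alpha,\Gamma}\phi\rangle=\frac{\alpha}{(2\pi)^2}\int_{\R^2}\frac{|F_\phi(p)|^2}{|p|^2+\kappa^2}\,\D p,\qquad F_\phi(p):=\int_\R \e^{-ip\cdot\Gamma(s)}\phi(s)\,\D s,
\]
from which nonnegativity is immediate and monotonicity in $\kappa$ comes from strict monotonicity of $(|p|^2+\kappa^2)^{-1}$. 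For the norm decay, the curves admitted by \eqref{ai}--\eqref{aii} are Lipschitz graphs, so the projection onto $x_1$ is bi-Lipschitz as a function of arclength and produces a constant $c>0$ with $|\Gamma(s)-\Gamma(s')|\ge c|s-s'|$. Combined with the standard evaluation $\int_\R K_0(a|u|)\,\D u=\pi/a$ and monotonicity of $K_0$, this yields $\int_\R K_0(\kappa|\Gamma(s)-\Gamma(s')|)\,\D s'\le\pi/(c\kappa)$ uniformly in $s$, and the Schur test gives $\|\RR^\kappa_{\alpha,\Gamma}\|\le\alpha/(2c\kappa)\to 0$ as $\kappa\to\infty$.

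The essential-spectrum identity is the core step. Set $\mu(\kappa):=\sup\sigma_{\mathrm{ess}}(\RR^\kappa_{\alpha,\Gamma})$; by the operator monotonicity and norm decay just proven, $\mu$ is nonnegative, nonincreasing, and tends to zero at infinity. I would prove $\mu(\kappa_0)=1$ in two halves via a Weyl-sequence form of Proposition \ref{prop:BS}(iii). Starting from a Weyl sequence $(\psi_n)\subset H^1(\R^2)$ for $H_{\alpha,\Gamma}+\kappa_0^2$, which exists since $-\kappa_0^2\in\sigma_{\mathrm{ess}}(H_{\alpha,\Gamma})$, the traces $\phi_n(s):=\psi_n(\Gamma(s))$ should, after a bounded renormalization, form a Weyl sequence in $L^2(m_\Gamma)$ for $\RR^{\kappa_0}_{\alpha,\Gamma}-I$, yielding $\mu(\kappa_0)\ge 1$. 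For the matching inequality, any $\lambda\ge 1$ in $\sigma_{\mathrm{ess}}(\RR^{\kappa_0}_{\alpha,\Gamma})$ would, via the rescaling $\RR^{\kappa_0}_{\alpha,\Gamma}/\lambda=\RR^{\kappa_0}_{\alpha/\lambda,\Gamma}$ and the same Weyl-sequence transfer, place $-\kappa_0^2$ in $\sigma_{\mathrm{ess}}(H_{\alpha/\lambda,\Gamma})$; quadratic-form monotonicity in the coupling, $H_{\alpha,\Gamma}\le H_{\alpha/\lambda,\Gamma}$ for $\lambda\ge 1$, combined with the definition of $\kappa_0$, then forces $\lambda=1$, whence $\mu(\kappa_0)\le 1$.

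The main obstacle I anticipate is the Weyl-sequence transfer in both directions: controlling the $L^2(m_\Gamma)$-norms of the traces $\psi_n|_\Gamma$ from below (so that they are bona fide Weyl sequences rather than vanishing remainders), and, in the opposite direction, reconstructing an $L^2(\R^2)$ Weyl sequence out of one on the curve through the operator $R^{i\kappa_0}_{\D x,m_\Gamma}$ of Proposition \ref{prop:BS}(i). Both steps hinge on a uniform $H^1(\R^2)\to L^2(m_\Gamma)$ trace inequality on Lipschitz graphs and a careful application of the resolvent identity to absorb the vanishing residuals $(H_{\alpha,\Gamma}+\kappa_0^2)\psi_n\to 0$; once these ingredients are in place, the identification $\mu(\kappa_0)=1$ is immediate.
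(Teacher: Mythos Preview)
Your treatment of positivity, monotonicity, and the norm decay is correct and in fact more explicit than the paper's, which dispatches all three in a single sentence by pointing to the positivity of $(-\Delta+\kappa^2)^{-1}$ and the elementary properties of $K_0$. The Fourier representation you use is exactly the mechanism behind the paper's first sentence, and the Schur bound is a clean way to make the norm decay quantitative.

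For the essential-spectrum identity the paper takes a very different route: it simply invokes the monotonicity together with part~(ii) of Proposition~\ref{prop:BS}, relying (via the footnote) on Pushnitski's extension of the Birman--Schwinger principle to the essential spectrum~\cite{Pu11} as a black box. Your plan to reprove this from scratch by transferring Weyl sequences in both directions is a legitimate alternative strategy, and the forward direction $\mu(\kappa_0)\ge 1$ can indeed be carried out along the lines you indicate: the lower bound on $\|\phi_n\|_{L^2(m_\Gamma)}$ follows from the quadratic form identity $q_{\alpha,\Gamma}[\psi_n]=\|\nabla\psi_n\|^2-\alpha\|\phi_n\|^2\to -\kappa_0^2$, which forces $\|\phi_n\|^2\ge\kappa_0^2/\alpha+o(1)$.

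There is, however, a genuine gap in your argument for $\mu(\kappa_0)\le 1$. From $\lambda\ge 1$ in $\sigma_{\mathrm{ess}}(\RR^{\kappa_0}_{\alpha,\Gamma})$ you correctly deduce $-\kappa_0^2\in\sigma_{\mathrm{ess}}(H_{\alpha/\lambda,\Gamma})$, and the form inequality $H_{\alpha,\Gamma}\le H_{\alpha/\lambda,\Gamma}$ does give $\inf\sigma_{\mathrm{ess}}(H_{\alpha,\Gamma})\le\inf\sigma_{\mathrm{ess}}(H_{\alpha/\lambda,\Gamma})$ via the sup--inf characterization. But combining these yields only $\inf\sigma_{\mathrm{ess}}(H_{\alpha/\lambda,\Gamma})=-\kappa_0^2$, i.e.\ the two operators share the same essential-spectrum threshold; this does \emph{not} force $\lambda=1$. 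To close the argument you would need \emph{strict} monotonicity of $\beta\mapsto\inf\sigma_{\mathrm{ess}}(H_{\beta,\Gamma})$, which is true here (for the periodic $\Gamma_0$ one can see it from Feynman--Hellmann applied to the fiber ground state at $\theta=0$, whose eigenfunction has nonzero trace on the curve) but is an additional input not supplied by the form inequality alone. Without that step, or an alternative such as norm-continuity of $\kappa\mapsto\RR^\kappa_{\alpha,\Gamma}$ combined with an intermediate-value argument, the upper bound on $\mu(\kappa_0)$ remains unproven.
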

 %----------------%
\begin{proof}
The positivity follows from positivity of $(-\Delta+\kappa^2)^{-1}$, the monotonicity and vanishing as $\kappa\to\infty$ follow from the properties of $K_0(\cdot)$. The last claim is a consequence of the monotonicity in combination with part (ii) of the previous proposition\footnote{Note that the BS principle can be applied to the essential spectrum directly using the spectral shift function \cite{Pu11}.}.
\end{proof}

The Birman-Schwinger principle makes it possible to investigate spectral properties of $H_{\alpha,\Gamma}$ through those of $\RR^\kappa_{\alpha,\Gamma}$. To this aim, it is in our case useful to employ the unitary map
 % ------------- %
 $$ %\begin{equation} \label{jacobiU}
U:\,L^2(\R)\to\ L^2(\R), \quad (U\psi)(s) = \psi(s(x_1))\, \sqrt[4]{1+\gamma(x_1)^2},
 $$ %\end{equation}
 % ------------- %
and examine the operator $\tilde\RR^\kappa_{\alpha,\Gamma} = U\RR^\kappa_{\alpha,\Gamma}U^{-1}$ with the kernel
 % ------------- %
 \begin{equation} \label{Ukern}
\tilde\RR^\kappa_{\alpha,\Gamma}(x_1,x_1') = \RR^\kappa_{\alpha,\Gamma}(s(x_1),s(x_1')).
 \end{equation}
 % ------------- %

%%%%%%%%%%%%%%%%%%%%%%%
\section{The perturbed essential spectrum}
\label{s:pert}

Our aim to find what is the effect of the perturbation \eqref{rescaled} on the spectrum, in particular, below $\epsilon_0=\inf\sigma\big(H_{\alpha,\Gamma_0}\big)$. Since the unperturbed spectrum is purely essential by Proposition~\ref{prop:ess}, we have first to check that this spectral component cannot spread below $\epsilon_0$ under the hypotheses made so far; recall that assumption \eqref{ai} about the unperturbed curve was used in the indicated proposition and \eqref{aii} concerns the perturbation \eqref{rescaled}.
 %----------------%
\begin{theorem} \label{thm:tresh}
Assuming \eqref{ai} and \eqref{aii}, we have $\inf\sigma_\mathrm{ess}\big(H_{\alpha,\Gamma_\tau}\big)=\epsilon_0$.
\end{theorem}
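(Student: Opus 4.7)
The plan is to establish both inequalities, starting from the key geometric observation that since $\tau'$ is compactly supported there exist $L>0$ and $\tau_\pm\in\R$ with $\tau(x_1)=\tau_\pm$ for $\pm x_1>L$. In each half-plane $\{\pm x_1>L\}$ the perturbed curve $\Gamma_\tau$ coincides with the translated periodic curve $\Gamma_0^{(\tau_\pm)}$ obtained from $\Gamma_0$ by the Euclidean shift $(\tau_\pm,0)$. Since $H_{\alpha,\Gamma_0^{(\tau_\pm)}}$ is unitarily equivalent to $H_{\alpha,\Gamma_0}$ via this translation, its spectrum and its generalized eigenfunction at the threshold $\eps_0$ are obtained from the unperturbed ones by the same shift.

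For the lower bound $\inf\sigma_\mathrm{ess}(H_{\alpha,\Gamma_\tau})\ge\eps_0$ I would combine Persson's principle with an IMS localization. Given $N\in\N$, fix a smooth partition $\chi_+^2+\chi_-^2+\chi_0^2=1$ on $\R$ with $\chi_\pm$ supported in $\{\pm x_1>L\}$, equal to one on $\{\pm x_1>L+N\}$, and $|\chi_\pm'|\le C/N$. For $\psi\in H^1(\R^2)$ supported in $\R^2\setminus B_R$ with $R$ large enough (depending on $N$ and $\max|\gamma|$), the cutoff $\chi_0\psi$ lies outside the horizontal strip containing $\Gamma_\tau$, so $q_{\alpha,\Gamma_\tau}[\chi_0\psi]=\|\nabla(\chi_0\psi)\|^2\ge\eps_0\|\chi_0\psi\|^2$ (using $\eps_0<0$), while each $\chi_\pm\psi$ is supported where $\Gamma_\tau=\Gamma_0^{(\tau_\pm)}$, so $q_{\alpha,\Gamma_\tau}[\chi_\pm\psi]=q_{\alpha,\Gamma_0^{(\tau_\pm)}}[\chi_\pm\psi]\ge\eps_0\|\chi_\pm\psi\|^2$. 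The IMS identity $q_{\alpha,\Gamma_\tau}[\psi]=\sum_j q_{\alpha,\Gamma_\tau}[\chi_j\psi]-\sum_j\||\chi_j'|\psi\|^2$ then yields $q_{\alpha,\Gamma_\tau}[\psi]\ge(\eps_0-C'/N^2)\|\psi\|^2$; sending $R\to\infty$ and then $N\to\infty$ closes the direction.

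For the reverse inequality I would construct a Weyl singular sequence at $\eps_0$. By Remark~\ref{rem:ac}(a), the $a$-periodic extension in $x_1$ of the lowest eigenfunction of $H_{\alpha,\Gamma_0}(0)$ gives a generalized eigenfunction $\psi_0$ of $H_{\alpha,\Gamma_0}$ at $\eps_0$, bounded in $x_1$ and exponentially decaying in $x_2$. Its translate $\psi_0^+(x_1,x_2):=\psi_0(x_1-\tau_+,x_2)$ plays the same role for $H_{\alpha,\Gamma_0^{(\tau_+)}}$. Choose $\chi_n\in C_c^\infty(\R)$ with $\chi_n\equiv1$ on $[L+2,L+2n]$, $\mathrm{supp}\,\chi_n\subset[L+1,L+2n+1]$, and $\|\chi_n'\|_\infty+\|\chi_n''\|_\infty$ bounded uniformly in $n$. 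The function $\phi_n(x):=\chi_n(x_1)\psi_0^+(x)$ has support in the right tail where $\Gamma_\tau=\Gamma_0^{(\tau_+)}$, so it inherits the $\delta$-jump of $\psi_0^+$ along this curve, lies in the domain of $H_{\alpha,\Gamma_\tau}$, and satisfies
 $$
(H_{\alpha,\Gamma_\tau}-\eps_0)\phi_n=-2\chi_n'\,\pd_1\psi_0^+ - \chi_n''\,\psi_0^+,
 $$
whose $L^2$-norm is uniformly bounded in $n$. Meanwhile $\|\phi_n\|^2\gtrsim n$ by $x_1$-periodicity of $\psi_0^+$, so $\|(H_{\alpha,\Gamma_\tau}-\eps_0)\phi_n\|/\|\phi_n\|\to0$, and $\phi_n/\|\phi_n\|\rightharpoonup0$ as the supports escape to infinity. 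Hence $\eps_0\in\sigma_\mathrm{ess}(H_{\alpha,\Gamma_\tau})$.

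The main delicate point will be the IMS step: one has to verify carefully that $\chi_0\psi$ is genuinely separated from the curve $\Gamma_\tau$ for all relevant $\psi$ when $R$ is large (which works because $\Gamma_\tau$ stays in a bounded horizontal strip), and that the singular interaction term localizes cleanly under the partition of unity. A more streamlined but less elementary alternative, suited to the machinery of Section~\ref{s:BS}, is to observe that $\tilde\RR^\kappa_{\alpha,\Gamma_\tau}(x_1,x_1')=\tilde\RR^\kappa_{\alpha,\Gamma_0}(x_1,x_1')$ whenever both arguments lie in the same tail $\pm x_1,\pm x_1'>L$ (horizontal translation preserves Euclidean distances), while elsewhere the exponential decay of $K_0$ renders the kernel difference Hilbert--Schmidt; Weyl stability of essential spectrum, combined with Proposition~\ref{prop:BS2} and the monotonicity of $\kappa\mapsto\sup\sigma_\mathrm{ess}(\tilde\RR^\kappa_{\alpha,\Gamma_\tau})$, would then force the corresponding BS thresholds, and hence $\inf\sigma_\mathrm{ess}$, to coincide.
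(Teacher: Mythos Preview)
Your Weyl-sequence argument for $\inf\sigma_\mathrm{ess}\le\eps_0$ is essentially the paper's, with only cosmetic differences in the choice of mollifier (the paper shifts a fixed $g\in C_0^\infty$ so that the supports are pairwise disjoint and avoid $\mathrm{supp}\,\tau'$, you let the support grow; either gives weak convergence to zero). For the lower bound, however, you take a genuinely different route. The paper uses Neumann bracketing: it cuts the plane into two periodic half-planes, two free half-strips, and a bounded box, and then---since in the absence of mirror symmetry the Neumann half-plane threshold need not equal $\eps_0$ on the nose---runs a monotone strong-resolvent-convergence argument to identify the limit of the Neumann infima with $\eps_0$. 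Your IMS/Persson approach sidesteps that limiting step entirely: once $\psi$ is supported outside a large ball, the middle piece $\chi_0\psi$ misses the curve and the two tail pieces see only a translate of $\Gamma_0$, so each summand is bounded below by $\eps_0$ directly, and the localization error is $\OO(N^{-2})$. This is cleaner and more robust. The price is that you invoke Persson's theorem for a Schr\"odinger operator with a measure-valued potential; this is true because the form $q_{\alpha,\Gamma_\tau}$ is local and the trace term on a bounded arc is form-compact relative to $-\Delta$, but it is not the textbook statement, so you should either cite a version covering singular interactions or insert the one-line justification (e.g.\ that removing the interaction inside $B_R$ is a relatively form-compact perturbation, hence preserves $\sigma_\mathrm{ess}$). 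Your sketched Birman--Schwinger alternative is also viable and closer in spirit to the machinery used later in the paper.
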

 %----------------%
\begin{proof}
It is easy to see that the essential spectrum threshold cannot move up, $\inf\sigma\big(H_{\alpha,\Gamma_\tau}\big)\le\epsilon_0$. To this aim, it is sufficient to find an appropriate Weyl sequence, for instance, the generalized eigenfunction at the threshold with a suitable mollifier,
 % ------------- %
 \begin{equation} \label{Weyl}
f_m(x) = \frac{1}{\sqrt{m}}\,\psi_0(x)\,g\big(\textstyle{\frac{x_1-b_m}{m}}\big),
 \end{equation}
 % ------------- %
where $g$ is a fixed function from $C_0^\infty(\R)$. Choosing the shifts $b_m$ in such a way that the supports of $g$ and $\tau'$ are disjoint, and therefore $\Gamma_\tau=\Gamma_0$ holds on $\mathrm{supp}\,f_m$ (modulo a possible longitudinal shift), we can express
 % ------------- %
 \begin{align} \label{Weyl2}
& (H_{\alpha,\Gamma_\tau} f_m -\epsilon_0f_m)(x) \\ & \quad = -2m^{-3/2}\,(\pd_1\psi_0)(x)\,g'\big(\textstyle{\frac{x_1-b_m}{m}}\big) -m^{-5/2}\psi_0(x) g''\big(\textstyle{\frac{x_1-b_m}{m}}\big). \nonumber
 \end{align}
 % ------------- %
Next we observe that by Remarks~\ref{rem:ac}(a) $\psi_0$ is a periodic extension of a Lipschitz function from $L^2(S_a)$ which is smooth away from $\Gamma_0$ and that its derivative normal to the curve has a jump equal to $\alpha$; it follows that both $\psi_0$ and $\pd_1\psi_0$ have a uniform bound, and consequently, the norms of the two terms at the right-hand side of \eqref{Weyl2} are $\OO(m^{-1})$ and $\OO(m^{-2})$, respectively, as $m\to\infty$. Moreover, one can choose the sequence $\{b_m\}$ is such a way that the functions $f_m$ have disjoint supports in which case $f_m\to 0$ weakly as $m\to\infty$, and from that $\epsilon_0 \in \sigma\big(H_{\alpha,\Gamma_\tau}\big)$ follows.

The opposite inequality can be obtained by Neumann bracketing \cite[Sec.~XIII.15]{RS78}; it is enough to find an operator obtained from $H_{\alpha,\Gamma_\tau}$ by imposing additional Neumann condition such that the threshold of its essential spectrum is $\epsilon_0$. To begin with, we divide the plane into union (modulo the boundaries of codimension one) of five disjoint regions, namely $\Omega_1^{(\pm)}:= \{x:\,\pm x_1>(n+\frac12)a, \,x_2\in\R\}$, $\Omega_2^{(\pm)}:= \{x:\, |x_1|<(n+\frac12)a, \pm x_2>\|\gamma\|_\infty\}$, and $\Omega_0:= \{x:\, |x_1|<(n+\frac12)a, |x_2|<\|\gamma\|_\infty\}$; the number $n$ is chosen in such a way that $\mathrm{supp}\,\tau \subset \big(-(n+\frac12)a,(n+\frac12)a\big)$ in which case $\Gamma_\tau$ coincides with $\Gamma_0$ in $\Omega_1^{(\pm)}$ up to a possible shift. Neumann conditions at the common boundaries of the domains allow us to estimate or operator from below,
 % ------------- %
 \begin{equation} \label{Nbrack}
H_{\alpha,\Gamma_\tau} \ge H_{\alpha,\Gamma_\tau} = H_{\alpha,\Gamma_\tau}^{(1,+)} \oplus H_{\alpha,\Gamma_\tau}^{(1,-)} \oplus H_{\alpha,\Gamma_\tau}^{(1,-)} \oplus H_{\alpha,\Gamma_\tau}^{(2,+)} \oplus H_{\alpha,\Gamma_\tau}^{(2,-)} \oplus H_{\alpha,\Gamma_\tau}^{(0)}.
 \end{equation}
 % ------------- %
The last three terms of this decomposition are irrelevant as far as the essential spectrum threshold is concerned, since $H_{\alpha,\Gamma_\tau}^{(2,\pm)}\ge 0$ and we know that $\epsilon_0<0$, and $\Omega_0$ is bounded so that the spectrum of $H_{\alpha,\Gamma_\tau}^{(0)}$ is purely discrete.

The estimate \eqref{Nbrack} yields the result if the function $\gamma$ is mirror-symmetric; without loss of generality we may suppose that the symmetry point is at the origin, $\gamma(x_1)= \gamma(-x_1)$. Indeed, in that case we have $\pd_1\psi_0(0,x_2)=0$ for any $x_2\in\R$ and $\inf\sigma\big(H_{\alpha,\Gamma_0}\big) =\inf\sigma\big(H_{\alpha,\Gamma_0}^\mathrm{N}\big)$, where the last operator has an additional Neumann condition imposed at the line $x_1=0$, however, the said operator is unitarily equivalent to $H_{\alpha,\Gamma_\tau}^{(1,+)} \oplus H_{\alpha,\Gamma_\tau}^{(1,-)}$.

In the absence of the symmetry this argument no longer works and have to take a closer look at the spectral threshold of the operators $H_{\alpha,\Gamma_\tau}^{(1,\pm)}$. For definiteness, we consider the plus sign, the argument for the other is analogous. In the same spirit as above,
 % ------------- %
 $$ %\begin{equation} \label{Nbrack2}
H_{\alpha,\Gamma_\tau}^{(1,+)} \ge \sum_{m\in\N_0}\!\!\raisebox{1.5ex}{{\scriptsize $\oplus$}} H_{\alpha,\Gamma_\tau}^{(1,m)},
 $$ %\end{equation}
 % ------------- %
where the orthogonal sum terms refer to restrictions of $H_{\alpha,\Gamma_\tau}^{(1,m)}$ to the regions $\Omega_{1,m}:= \{x:\,(n+m(2j+1)+\frac12)a < x_1 < (n+(m+1)(2j+1)+\frac12)a,\,x_2\in\R\}$ with Neuman boundary conditions. They are all unitarily equivalent by shifts on multiples of $(2j+1)a$ in the $x_1$ direction, hence we can consider one of them only, and this is in turn unitarily equivalent to the Neumann restriction $H_{\alpha,\Gamma_0}^{[j]}$ of $H_{\alpha,\Gamma_0}$ to $\tilde\Omega_j:= \{x:\, |x_1|<(j+\frac12)a,\, x_2\in\R\}$ for any fixed $j\in\N_0$. Using Neumann bracketing inductively, on checks easily that $\epsilon[j]:= \inf\sigma\big(H_{\alpha,\Gamma_0}^{[j]}\big)$ is nondecreasing with respect to $j$ and bounded from above by $\epsilon_0$, hence the limit $\epsilon_\infty = \lim_{j\to\infty} \epsilon[j]$ exists and $\epsilon_\infty < \epsilon_0$. Consider next the family of quadratic forms with the common domain $H^1(\R^2)$ defined by
 % ------------- %
 $$ %\begin{equation} \label{cutform}
q^{(j)}_{\alpha,\Gamma_0}[\psi] = \int_{\tilde\Omega_j} |\nabla\psi(x)|^2\,\D x - \alpha \int_\R |\psi(\Gamma_0(s))|^2\,\D s,
 $$ %\end{equation}
 % ------------- %
clearly associated with the orthogonal sum of operators, $H_{\alpha,\Gamma_0}^{[j]} \oplus (-\alpha I)$ on $L^2(\tilde\Omega_j) \oplus L^2(\R^2\setminus\tilde\Omega_j)$. In view of the absolute continuity of the Lebesgue integral we have $\lim_{j\to\infty} q^{(j)}_{\alpha,\Gamma_0}[\psi] = q_{\alpha,\Gamma_0}[\psi]$ for any $\psi\in H^1(\R^2)$, and this in view of the form family monotonicity implies that $H_{\alpha,\Gamma_0}^{[j]} \oplus (-\alpha I) \to H_{\alpha,\Gamma_0}$ in the strong resolvent sense, cf.~\cite[Sec.~VIII.3.4]{Ka76} or \cite[Thm.~10.12]{Si18}. In this way, we arrive at $\epsilon_\infty = \epsilon_0$ which concludes the proof.
\end{proof}

Theorem~\ref{thm:tresh} is what we will need in the next section. One expects that under the assumptions of the said theorem the whole essential spectrum will be preserved. For a subset of local perturbations which one may label as `critical', specified by the additional requirement
 % ------------- %
 \begin{enumerate}[(i)]
 \setcounter{enumi}{2}
 \setlength{\itemsep}{1.5pt}
 \item $\int_\R \tau'(x_1)\,\D x_1 = 0$, \label{aiii}
  \end{enumerate}
 % ------------- %
there are simple ways how to prove the invariance:
 %----------------%
\begin{proposition} \label{prop:essinv}
Under \eqref{ai}--\eqref{aiii}, we have $\sigma_\mathrm{ess}\big(H_{\alpha,\Gamma_\tau}\big)=\sigma_\mathrm{ess}\big(H_{\alpha,\Gamma_0}\big)$.
\end{proposition}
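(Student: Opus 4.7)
The plan is to reduce to the case of a compactly supported $\tau$, show that $H_{\alpha,\Gamma_\tau}$ and $H_{\alpha,\Gamma_0}$ have compact resolvent difference, and then invoke Weyl's theorem on invariance of the essential spectrum.

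First, exploit \eqref{aii} together with \eqref{aiii}. Choose $N$ so that $\mathrm{supp}\,\tau'\subset[-N,N]$. Then $\tau$ is constant on each of the half-lines $(-\infty,-N)$ and $(N,\infty)$, and the two constants coincide because their difference equals $\int_\R\tau'=0$. Call this common value $c$; the curve $\Gamma_{\tau-c}$ is the horizontal translate of $\Gamma_\tau$ by $(-c,0)$, and the translation unitary $U_c:\psi(x_1,x_2)\mapsto\psi(x_1-c,x_2)$ yields $H_{\alpha,\Gamma_{\tau-c}}=U_cH_{\alpha,\Gamma_\tau}U_c^{-1}$. Since this does not change the spectrum, we may assume that $\tau$ itself has compact support, so that $\Gamma_\tau$ and $\Gamma_0$ coincide outside a bounded subset of $\R^2$.

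Next, fix $\kappa>0$ so large that $-\kappa^2$ lies below both essential spectra and the factor $I-\alpha R^{i\kappa}_{m_{\Gamma_\bullet},m_{\Gamma_\bullet}}$ is boundedly invertible for $\bullet\in\{0,\tau\}$ (possible by Proposition~\ref{prop:BS2}). Apply the BS formula of Proposition~\ref{prop:BS}(i) to both operators and subtract. The free term $R_0^{i\kappa}$ cancels, leaving a finite rational combination of the boundary operators $R^{i\kappa}_{\D x,m_{\Gamma_\bullet}}$, $R^{i\kappa}_{m_{\Gamma_\bullet},\D x}$, $R^{i\kappa}_{m_{\Gamma_\bullet},m_{\Gamma_\bullet}}$. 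For each of these three types, the pairwise difference indexed by $\bullet$ has an integral kernel that vanishes unless the corresponding curve-parameter argument lies in the bounded region where the two curves differ; combined with the $L^2_{\mathrm{loc}}$ behavior and exponential decay of the Macdonald kernel, each such difference is Hilbert--Schmidt, hence compact. Assembling the pieces gives compactness of $R^{i\kappa}_{\alpha,\Gamma_\tau}-R^{i\kappa}_{\alpha,\Gamma_0}$, whence Weyl's theorem delivers the claimed equality of essential spectra.

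The main technical hurdle is the Hilbert--Schmidt estimate of the kernel differences. The Macdonald function $K_0(\kappa r)$ has a logarithmic singularity at $r=0$, so one needs to combine a local $L^p$-bound near the diagonal with its exponential decay at infinity in order to conclude Hilbert--Schmidt membership of the relevant compactly supported truncations. Once this is done, everything else is algebraic bookkeeping in the BS identity, together with the invertibility of $I-\alpha R^{i\kappa}_{m_{\Gamma_\bullet},m_{\Gamma_\bullet}}$ at sufficiently large $\kappa$.
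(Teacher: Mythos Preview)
Your approach is correct and gives a valid proof, but it differs from both arguments the paper presents. The paper first proves the two inclusions directly via Weyl sequences: $\sigma_{\mathrm{ess}}(H_{\alpha,\Gamma_0}) \subset \sigma_{\mathrm{ess}}(H_{\alpha,\Gamma_\tau})$ comes from mollified generalized eigenfunctions placed far from the perturbation, while the reverse inclusion is obtained by observing that any Weyl sequence $\{\psi_m\}$ for $H_{\alpha,\Gamma_\tau}$ is automatically one for $H_{\alpha,\Gamma_0}$, since the difference $\|(H_{\alpha,\Gamma_0}-H_{\alpha,\Gamma_\tau})\psi_m\|$ is an integral over a compact arc of a function tending pointwise to zero. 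The paper then gives an alternative: an abstract lemma showing that if $B=H_{\alpha,\Gamma_\tau}-H_{\alpha,\Gamma_0}$ satisfies $BP_m^\perp\to 0$ for projections $P_m$ onto $L^2$ of large balls, and $P_m(H_{\alpha,\Gamma_0}+i)^{-1}$ is compact (here by Rellich--Kondrashov), then the resolvent difference is compact.

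Your route is closest in spirit to this second argument---you also establish compactness of the resolvent difference---but you go through the explicit Birman--Schwinger resolvent formula and Hilbert--Schmidt kernel estimates rather than the abstract projection device. This buys you a little more (Hilbert--Schmidt rather than merely compact) at the cost of the kernel computations you flag in your last paragraph. Your opening reduction to compactly supported~$\tau$ via the translation unitary $U_c$ is a clean way to make the role of hypothesis~\eqref{aiii} explicit; the paper uses the same consequence of~\eqref{aiii} but does not isolate it as a separate step. One minor imprecision worth tightening: for the curve-to-curve operator $R^{i\kappa}_{m_\Gamma,m_\Gamma}$ the kernel difference is supported where \emph{at least one} argument lies in the perturbed interval, not where both do, so the Hilbert--Schmidt bound must combine compactness in one variable with the exponential decay of $K_0$ in the other; your closing remarks suggest you have this in mind.
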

 %----------------%
\begin{proof}
It is straightforward to see that $\sigma_\mathrm{ess}\big(H_{\alpha,\Gamma_\tau}\big) \supset\sigma_\mathrm{ess}\big(H_{\alpha,\Gamma_0}\big)$ using suitable Weyl sequences. For the nonnegative $k^2\in \sigma_\mathrm{ess}\big(H_{\alpha,\Gamma_0}\big)$ its elements can be plane waves of momentum $k$ with compactly supported mollifiers, for the energies $0>\epsilon_0+\kappa^2\in \sigma_\mathrm{ess}\big(H_{\alpha,\Gamma_0}\big)$ we repeat the argument from the previous proof replacing $\psi_0$ in \eqref{Weyl} by the periodic extension of the function $\psi_\kappa$ satisfying $H_{\alpha,\Gamma_0}(\theta)\psi_\kappa = (\epsilon_0+\kappa^2) \psi_\kappa$.

To prove the opposite inclusion, we can use Weyl's criterion again; as before we may consider the negative part of $\sigma_\mathrm{ess}\big(H_{\alpha,\Gamma_\tau}\big)$ only. To any element $\epsilon_0+\kappa^2$ of it, there is a sequence $\{\psi_m\}\subset H^2(\R^2)$ such that $\psi_m\to 0$ weakly and $\|(H_{\alpha,\Gamma_\tau}-\epsilon_0-\kappa^2)\psi_m\|\to 0$ as $m\to\infty$. The point is that under \eqref{aiii}, $\{\psi_m\}$ is a Weyl sequence for $H_{\alpha,\Gamma_0}$ as well. Indeed, we have
 % ------------- %
$$ %\begin{equation} \label{Weyl3}
\|(H_{\alpha,\Gamma_0}-\epsilon_0-\kappa^2)\psi_m\| \le \|(H_{\alpha,\Gamma_\tau}-\epsilon_0-\kappa^2)\psi_n\| + \|(H_{\alpha,\Gamma_0}-H_{\alpha,\Gamma_\tau})\psi_n\|.
$$ %\end{equation}
 % ------------- %
The first term on the right-hand side tend to zero as $m\to\infty$ by assumption, the second can be written explicitly as $\big(\int_\R |\psi_m(\Gamma_0(s)) - \psi_m(\Gamma_\tau(s))|^2\,\D s \big)^{1/2}$, where the integrated function is continuous and compactly supported. The weak convergence then implies $\psi_m$ converges to zero pointwise being uniformly bounded \cite[Prop.~19.3.1]{Se71}, and as a result, this term vanishes in the limit too yielding thus the sought inclusion.

There is an \emph{alternative way}\footnote{The idea belongs to the anonymous referee of the paper \cite{ES25}.} to prove the claim using the following `abstract nonsense' argument. Let $B$ be relatively bounded with respect to a self-adjoint $A_0$ with the bound less than one so that $A=A_0+B$ is also self-adjoint. Furthermore, let $\{P_m\}$ be a family of projections such that $\|BP_m^\perp\|\to 0$ as $m\to\infty$. If $P_m(A_0+i)^{-1}$ is compact for all $m\ge 1$, then $(A+i)^{-1}-(A_0+i)^{-1}$ is compact as one can see from the identity
 % ------------- %
 \begin{equation} \label{anonsense}
(A+i)^{-1}-(A_0+i)^{-1} = (A+i)^{-1}BP_m^\perp(A_0+i)^{-1} + (A+i)^{-1}BP_m(A_0+i)^{-1}.
 \end{equation}
 % ------------- %
Indeed, the operator $(A+i)^{-1}B$ is bounded because its adjoint is bounded by assumption, which implies that the second term on the right-hand side of \eqref{anonsense} is compact for all $m$. The first one, on the other hand, converges to zero in the operator norm; the second factor is compact for any $m$ and its norm limit as $m\to\infty$ is thus a compact operator again. This result can be applied to our case, where $A_0=H_{\alpha,\Gamma_0}$ and $B$ is the difference of the singular terms corresponding to $\Gamma_\tau$ and $\Gamma_0$. Choosing, for instance, projections $P_m$ with $\mathrm{Ran}\,P_m = L^2(B_m(0))$, we have $BP_m^\perp=0$ for all $m$ large enough. At the same time, we know that $D(H_{\alpha,\Gamma_0})=H^2(\R^2)$, hence the needed compactness of operator $P_m(H_{\alpha,\Gamma_0}+i)^{-1}$ is a consequence of Rellich-Kondrashov theorem \cite[Theorem~{6.3}]{AF03}.
\end{proof}

%%%%%%%%%%%%%%%%%%%%%%%
\section{Discrete spectrum due to local perturbations}
\label{s:discrete}

To find whether the perturbation can create a spectrum below the threshold of the essential one, we are going to compare Birman-Schwinger operators referring to $\Gamma_0$ and $\Gamma_\tau$. We follow the idea proposed in \cite{EI01}: by Proposition~\ref{prop:BS2} both $\sup\sigma_\mathrm{ess}\big(\RR^\kappa_{\alpha,\Gamma}\big)$ and the possible eigenvalues above it decrease continuously to zero as $\kappa\to\infty$. We have thus to check the inequality
 % ------------- %
 \begin{equation} \label{excondition}
\sup\sigma\big(\RR^{\kappa_0}_{\alpha,\Gamma_\tau}\big) > \sup\sigma_\mathrm{ess}\big(\RR^{\kappa_0}_{\alpha,\Gamma_0}\big)
 \end{equation}
 % ------------- %
for $\kappa_0:= \big(\!-\inf\sigma\big(H_{\alpha,\Gamma_0}\big)\big)^{1/2}$, since if it holds, there is at least one eigen\-value $\mu(\kappa)$ of $\RR^\kappa_{\alpha,\Gamma_\tau}$ above the essential spectrum threshold which crosses the value one \emph{to the right} of the point $\kappa_0$, and consequently, the discrete spectrum of the operator is by Proposition~\ref{prop:BS}(ii) nonempty.

As one might expect, this happens if the perturbation leads to a local contraction of the interaction support:
 %----------------%
\begin{theorem} \label{thm:contract}
Under \eqref{ai} and \eqref{aii}, $\sigma_\mathrm{disc}\big(H_{\alpha,\Gamma_\tau}\big)\ne\emptyset$ holds if $\tau'\le 0$.
\end{theorem}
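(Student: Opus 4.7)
The plan is to exploit the Birman--Schwinger strategy announced at the start of this section: by Theorem~\ref{thm:tresh} combined with Proposition~\ref{prop:BS2} we have $\sup\sigma_\mathrm{ess}(\RR^{\kappa_0}_{\alpha,\Gamma_\tau})=1$, so it suffices to exhibit a function $\phi\in L^2(\R)$ with $\langle\phi,\RR^{\kappa_0}_{\alpha,\Gamma_\tau}\phi\rangle>\|\phi\|^2$, thereby forcing a discrete eigenvalue of $\RR^{\kappa_0}_{\alpha,\Gamma_\tau}$ above its essential spectrum. The preliminary step is the pointwise kernel comparison $\RR^{\kappa_0}_{\alpha,\Gamma_\tau}(s,s')\ge\RR^{\kappa_0}_{\alpha,\Gamma_0}(s,s')$, strict on a set of positive $(s,s')$-measure. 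Since $K_0$ is strictly decreasing on $(0,\infty)$, this reduces to $|\Gamma_\tau(s)-\Gamma_\tau(s')|\le|\Gamma_0(s)-\Gamma_0(s')|$: the $x_2$-components of the two curves coincide, and under $\tau'\le 0$ (with the natural side-condition $1+\tau'\ge 0$ ensuring that $\Gamma_\tau$ remains simple), the shift $\tau(x_1(s))-\tau(x_1(s'))$ has sign opposite to $x_1(s)-x_1(s')$ and modulus bounded by $|x_1(s)-x_1(s')|$, so the $x_1$-component of the difference shrinks; since $\tau'\not\equiv 0$, the shrinkage is strict on a positive-measure set.

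For the trial function I would take $\phi_n(s):=\phi_0(s)\,g(s/n)$, where $g\in C_0^\infty(\R)$ is non-trivial and $\phi_0(s):=\psi_0(\Gamma_0(s))$ is the restriction to $\Gamma_0$ of the positive periodic threshold generalized eigenfunction $\psi_0$ from Remarks~\ref{rem:ac}(a). Applying $(-\Delta+\kappa_0^2)^{-1}$ to the formal identity $(-\Delta+\kappa_0^2)\psi_0=\alpha\,(\psi_0|_{\Gamma_0})\,m_{\Gamma_0}$ and restricting to $\Gamma_0$ yields the pointwise eigenfunction relation
$$
\int_\R \RR^{\kappa_0}_{\alpha,\Gamma_0}(s,s')\,\phi_0(s')\,\D s' \;=\; \phi_0(s),
$$
the integral being absolutely convergent by the exponential decay of the Macdonald kernel. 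Splitting
$$
\langle\phi_n,\RR^{\kappa_0}_{\alpha,\Gamma_\tau}\phi_n\rangle - \|\phi_n\|^2 \;=\; A_n + B_n,
$$
with $A_n:=\langle\phi_n,\RR^{\kappa_0}_{\alpha,\Gamma_0}\phi_n\rangle-\|\phi_n\|^2$ and $B_n$ the cross term involving $\RR^{\kappa_0}_{\alpha,\Gamma_\tau}-\RR^{\kappa_0}_{\alpha,\Gamma_0}$, symmetrization in $(s,s')$ together with the eigenfunction relation gives the manifestly non-positive formula
$$
A_n = -\tfrac{1}{2}\iint \phi_0(s)\phi_0(s')\,\RR^{\kappa_0}_{\alpha,\Gamma_0}(s,s')\,\bigl[g(s/n)-g(s'/n)\bigr]^2\,\D s\,\D s'.
$$

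The Lipschitz bound $|g(s/n)-g(s'/n)|\le\|g'\|_\infty |s-s'|/n$, combined with $|\Gamma_0(s)-\Gamma_0(s')|\ge c\,|s-s'|$ for some $c>0$ (since $\Gamma_0$ is the graph of a Lipschitz function, arc length and $x_1$-length are comparable) and the exponential decay of $K_0$, yields $|A_n|=\OO(1/n)$, so $A_n\to 0^-$. For $B_n$, the kernel of $\RR^{\kappa_0}_{\alpha,\Gamma_\tau}-\RR^{\kappa_0}_{\alpha,\Gamma_0}$ is non-negative by the preliminary step, vanishes whenever $\tau(x_1(s))=\tau(x_1(s'))$, and decays exponentially with the Euclidean distance between the curve points, hence is integrable on $\R^2$; since $\phi_0$ is bounded and $g(s/n)\to g(0)$ pointwise, dominated convergence yields
$$
B_n \to g(0)^2 \iint \bigl[\RR^{\kappa_0}_{\alpha,\Gamma_\tau}-\RR^{\kappa_0}_{\alpha,\Gamma_0}\bigr](s,s')\,\phi_0(s)\phi_0(s')\,\D s\,\D s' \;>\; 0,
$$
the strict positivity following from $\phi_0>0$ together with the strict kernel inequality on a set of positive measure. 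Hence $A_n+B_n\to B>0$, and for sufficiently large $n$ the Rayleigh quotient of $\phi_n$ exceeds one; this forces $\RR^{\kappa_0}_{\alpha,\Gamma_\tau}$ to carry a discrete eigenvalue strictly above $1$, which by the monotonicity and continuity in $\kappa$ of Proposition~\ref{prop:BS2} crosses the value one at some $\kappa_1>\kappa_0$, and Proposition~\ref{prop:BS}(ii) then converts the crossing into an eigenvalue $-\kappa_1^2<\epsilon_0$ of $H_{\alpha,\Gamma_\tau}$. The main technical hurdle I anticipate is the rigorous justification of the estimate $A_n=\OO(1/n)$ uniformly in $n$, which rests on controlling the Macdonald kernel's exponential decay on the periodic curve $\Gamma_0$ together with the boundedness of $\phi_0$ — a property asserted in Remarks~\ref{rem:ac}(a) and used here in an essential way.
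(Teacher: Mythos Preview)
Your proof is correct and follows essentially the same Birman--Schwinger variational strategy as the paper: trial functions built from the threshold generalized eigenfunction $\phi_0$ times a mollifier, a split into an unperturbed error term that vanishes as the mollifier spreads plus a perturbation term converging to a strictly positive limit via the pointwise kernel inequality and dominated convergence. The technical execution differs only in minor ways --- you work in the $s$-parametrization, use a compactly supported cutoff together with the symmetrized identity for $A_n$ (giving $\OO(1/n)$), whereas the paper passes to the $x_1$-variable, employs the Lorentzian mollifier $g_n(x_1)=n^2/(n^2+x_1^2)$ and a commutator expression (giving $\OO(1/n^2)$); either rate suffices, but do remember to take $g(0)\ne 0$ so that the limit of $B_n$ is genuinely positive.
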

 %----------------%
\begin{proof}
To verify inequality \eqref{excondition}, it is sufficient to find a suitable trial function. The natural starting point is the generalized eigenfunction $\psi_0$ corresponding to $\inf\sigma\big(H_{\alpha,\Gamma_0}\big)$; by Remark~\ref{rem:ac}(a) it is simple and one choose it positive and by Proposition~\ref{prop:BS}(iii) the corresponding generalized eigenfunction of $\RR^\kappa_{\alpha,\Gamma_0}$ is $\phi_0(s)=\psi_0(\Gamma_0(s))$, or alternatively $\tilde\phi_0=U\phi_0$ referring to the operator with the kernel \eqref{Ukern}.

Such a function does not belong to $L^2$, of course, so we have to amend it with a suitable mollifier and ensure that the effect of the latter can be made arbitrarily small. Specifically, we want to prove that one can choose a sequence $\{g_n\}\subset L^1(\R)$ with the property
 % ------------- %
 \begin{equation} \label{mollif1}
\big(g_n\tilde\phi_0, \tilde\RR^{\kappa_0}_{\alpha,\Gamma_0}g_n\tilde\phi_0\big) - \|g_n\tilde\phi_0\|^2 = \OO(n^{-2})
 \end{equation}
 % ------------- %
holds as $n\to\infty$. We denote the left-hand side of \eqref{mollif1} as $M_n$; using the fact that by Proposition~\ref{prop:BS2} we have $\tilde\RR^{\kappa_0}_{\alpha,\Gamma_0}\tilde\phi_0 =\tilde\phi_0$, we can rewrite the second term as $\big(g_n\tilde\phi_0, g_n\tilde\RR^{\kappa_0}_{\alpha,\Gamma_0}\tilde\phi_0\big)$ so that
 % ------------- %
 $$ %\begin{equation} \label{mollif2}
M_n= \big(g_n\tilde\phi_0, \big[\tilde\RR^{\kappa_0}_{\alpha,\Gamma_0},g_n\big]\tilde\phi_0\big),
 $$ %\end{equation}
 % ------------- %
or explicitly
 % ------------- %
 \begin{align}
M_n= & \frac{\alpha}{2\pi} \int_{\R^2} g_n(x_1) \tilde\phi_0(x_1)\, K_0\big(\kappa_0|\Gamma_0(s(x_1))-\Gamma_0(s(x'_1))|\big) \nonumber \\ & \times (g_n(x'_1)-g_n(x_1))\, \tilde\phi_0(x'_1)\,\D x_1\D x'_1. \label{mollif3}
 \end{align}
 % ------------- %
To estimate the right-hand side of \eqref{mollif3} we note that the function $\tilde\phi_0$ is bounded, and moreover, the inequality
 % ------------- %
 $$ %\begin{equation} \label{mollif5}
|\Gamma_0(s(x_1))-\Gamma_0(s(x'_1))| = \sqrt{(x_1-x'_1)^2 + (\gamma(x_1)-\gamma(x'_1)^2} \ge |x_1-x'_1|
 $$ %\end{equation}
 % ------------- %
implies
 % ------------- %
 $$ %\begin{equation} \label{mollif6}
K_0\big(\kappa_0|\Gamma_0(s(x_1))-\Gamma_0(s(x'_1))|\big) \le K_0(\kappa_0|x_1-x'_1|)
 $$ %\end{equation}
 % ------------- %
in view of the monotonicity of $K_0(\cdot)$. Choosing then
 % ------------- %
 \begin{equation} \label{mollif7}
g_n(x_1) = \frac{n^2}{n^2+x_1^2}
 \end{equation}
 % ------------- %
we get the estimate
 % ------------- %
 $$ %\begin{equation} \label{mollif8}
M_n \le \frac{\alpha}{2\pi}\,n^2\, \|\tilde\phi_0\|^2_\infty \int_{\R^2} \frac{|x_1+x'_1||x_1-x'_1|}{(n^2+x_1^2)^2 (n^2+{x'_1}^2)}\, K_0(\kappa_0|x_1-x'_1|)\,\D x_1\D x'_1
 $$ %\end{equation}
 % ------------- %
Passing next to the variables $u=x_1-x'_1$ and $u=x_1-x'_1$, we obtain
 % ------------- %
 \begin{align*}
& M_n \le \frac{\alpha}{2\pi}\,n^2\, \|\tilde\phi_0\|^2_\infty \int_{\R^2} \frac{|u||v|}{\big(n^2+\frac14(u+v)^2\big)^2 \big(n^2+\frac14(u-v)^2\big)}\, K_0(\kappa_0|u|)\,\D u\D v \\
& = \frac{2\alpha}{\pi}\,n^2\, \|\tilde\phi_0\|^2_\infty \int_0^\infty \!\! \int_0^\infty \frac{u^2 v^2}{\big(n^2+\frac14(u+v)^2\big)^2 \big(n^2+\frac14(u-v)^2\big)^2}\, K_0(\kappa_0|u|)\,\D u\D v,
 \end{align*}
 % ------------- %
and since the denominator is bounded from below by $\frac{1}{16}\,(1+v^2)^2\,n^4$ for any $n\ge 1$ and the function $u\mapsto u^2\,K_0(\kappa_0|u|)$ is bounded and exponentially decaying as $u\to\infty$, we arrive finally at the inequality
 % ------------- %
 $$ %\begin{equation} \label{mollif9}
M_n \le \frac{32\alpha}{\pi}\,n^{-2}\, \|\tilde\phi_0\|^2_\infty \int_0^\infty \frac{v^2}{(1+v^2)^2}\,\D v\, \int_0^\infty u^2\,K_0(\kappa_0|u|)\,\D u
 $$ %\end{equation}
 % ------------- %
which yields the sought relation \eqref{mollif1}.

To prove that \eqref{excondition} holds under the assumptions of the theorem, we have to check that
 % ------------- %
 $$ %\begin{equation} \label{trial3}
\big(g_n\tilde\phi_0, \tilde\RR^{\kappa_0}_{\alpha,\Gamma_\tau}g_n\tilde\phi_0\big) - \|g_n\tilde\phi_0\|^2 >0
 $$ %\end{equation}
 % ------------- %
holds for some $n$. Adding and subtracting $\big(g_n\tilde\phi_0, \tilde\RR^{\kappa_0}_{\alpha,\Gamma_0}g_n\tilde\phi_0\big)$ to the left-hand side we see that this will happen provided
 % ------------- %
 \begin{equation} \label{trial_lim}
\lim_{n\to\infty} \big[\big(g_n\tilde\phi_0, \tilde\RR^{\kappa_0}_{\alpha,\Gamma_\tau}g_n\tilde\phi_0\big) - \big(g_n\tilde\phi_0, \tilde\RR^{\kappa_0}_{\alpha,\Gamma_0}g_n\tilde\phi_0\big)\big] >0
 \end{equation}
 % ------------- %
because then the difference will for all large enough $n$ exceed the effect of the mollifier. Using \eqref{BSkern} and \eqref{Ukern}, we can write both terms in \eqref{trial_lim} explicitly; using then \eqref{mollif7} and the dominated convergence theorem, the condition becomes
 % ------------- %
 \begin{align}
\int_{\R^2} \tilde\phi_0(x_1)\, & \big[K_0\big(\kappa_0|\Gamma_\tau(s(x_1))-\Gamma_\tau(s(x'_1))|\big) \nonumber \\ & - K_0\big(\kappa_0|\Gamma_0(s(x_1))-\Gamma_0(s(x'_1))|\big)\big]\, \tilde\phi_0(x'_1)\,\D x_1\D x'_1 > 0. \label{trial_lim2}
 \end{align}
 % ------------- %
It is now easy to see that \eqref{trial_lim2} holds not only for $\kappa_0$ but in fact for all $\kappa>0$. Indeed, the argument of the kernel contains
 % ------------- %
 $$ %\begin{equation} \label{expand}
|\Gamma_\tau(s(x_1))-\Gamma_\tau(s(x'_1))| = \sqrt{(x_1 +\tau(x_1)-x'_1 -\tau(x'_1))^2 + (\gamma(x_1)-\gamma(x'_1)^2}
 $$ %\end{equation}
 % ------------- %
so that $|\Gamma_\tau(s(x_1))-\Gamma_\tau(s(x'_1))| < |\Gamma_0(s(x_1))-\Gamma_0(s(x'_1))|$ holds on an open subset of $\R^2$ outside which the two expressions are equal; recall that we have excluded the trivial case $\tau=0$. The indicated conclusion then follows from the monotonicity of $K_0(\cdot)$ and positivity of the function $\tilde\phi_0$.
\end{proof}

If $\tau'$ is sign-changing, the situation is more involved, nevertheless, one can get an existence result in the situation where both the undulation of $\Gamma_0$ and the perturbation are gentle and regular enough. To be specific, we consider for given $\tau$ and $\gamma$ the following family of curves,
 % ------------- %
 $$ %\begin{equation} \label{rescaled2}
\Gamma^\varepsilon_{\varepsilon\tau}(s)=(x_1+ \varepsilon\tau(x_1),\varepsilon\gamma(x_1)),\;\; \varepsilon>0.
 $$ %\end{equation}
 % ------------- %
Under such an assumption, we can even consider the `critical' case when the perturbation has zero mean:
 %----------------%
\begin{theorem} \label{thm:gentlecrit}
In addition to assumptions \eqref{ai}--\eqref{aiii} made above, suppose that $\tau,\gamma\in C^2$ and $\tau'(0)=0$; then there is at least one eigenvalue below the essential spectrum bottom, so that $\sigma_\mathrm{disc}\big(H_{\alpha,\Gamma^\varepsilon_{\varepsilon\tau}}\big)\ne\emptyset$, for all~$\varepsilon$ small enough.
\end{theorem}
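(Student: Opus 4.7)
The plan is to follow the Birman--Schwinger strategy of Theorem~\ref{thm:contract}: verify \eqref{excondition} at $\kappa_0=\kappa_0(\varepsilon):=(-\inf\sigma(H_{\alpha,\Gamma^\varepsilon_0}))^{1/2}$ using the trial vector $g_n\tilde\phi_0^\varepsilon$, where $\tilde\phi_0^\varepsilon$ is the generalized BS eigenfunction of $\tilde\RR^{\kappa_0(\varepsilon)}_{\alpha,\Gamma^\varepsilon_0}$ and $g_n(x_1)=n^2/(n^2+x_1^2)$. The mollifier bound $M_n=O(n^{-2})$ from \eqref{mollif1}--\eqref{mollif7} holds uniformly for $\varepsilon$ in a neighbourhood of zero. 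Because $\tau'$ changes sign the pointwise monotonicity argument is unavailable, so the positivity of
\[
D_n(\varepsilon):=\bigl(g_n\tilde\phi_0^\varepsilon,\,[\tilde\RR^{\kappa_0(\varepsilon)}_{\alpha,\Gamma^\varepsilon_{\varepsilon\tau}}-\tilde\RR^{\kappa_0(\varepsilon)}_{\alpha,\Gamma^\varepsilon_0}]\,g_n\tilde\phi_0^\varepsilon\bigr)
\]
must instead be extracted from a second-order Taylor expansion in~$\varepsilon$.

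Setting $u=x_1-x_1'$, $\Delta\tau=\tau(x_1)-\tau(x_1')$, $\Delta\gamma=\gamma(x_1)-\gamma(x_1')$, one checks directly
\[
\bigl|\Gamma^\varepsilon_{\varepsilon\tau}(s(x_1))-\Gamma^\varepsilon_{\varepsilon\tau}(s(x_1'))\bigr|^2-\bigl|\Gamma^\varepsilon_0(s(x_1))-\Gamma^\varepsilon_0(s(x_1'))\bigr|^2=2\varepsilon u\Delta\tau+\varepsilon^2\Delta\tau^2,
\]
so using $K_0'=-K_1$ and $K_1'(x)=-K_0(x)-K_1(x)/x$, a short computation gives
\[
\tfrac{2\pi}{\alpha}\bigl[\tilde\RR^{\kappa_0}_{\alpha,\Gamma^\varepsilon_{\varepsilon\tau}}-\tilde\RR^{\kappa_0}_{\alpha,\Gamma^\varepsilon_0}\bigr](x_1,x_1')=-\varepsilon\kappa_0 K_1(\kappa_0|u|)\operatorname{sgn}(u)\Delta\tau+\tfrac{\varepsilon^2}{2}\bigl[\kappa_0^2 K_0(\kappa_0|u|)+\tfrac{\kappa_0 K_1(\kappa_0|u|)}{|u|}\bigr]\Delta\tau^2+O(\varepsilon^3).
\]
The $\Delta\gamma^2$ pieces appearing individually in the two distances cancel in the difference, so the quadratic-in-$\varepsilon$ coefficient is pointwise non-negative and strictly positive wherever $\Delta\tau\ne 0$---this is the source of the gain.

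Write $D_n(\varepsilon)=\varepsilon D_n^{(1)}+\tfrac{\varepsilon^2}{2}D_n^{(2)}+O(\varepsilon^3)$. The reflection $x_2\mapsto-x_2$ conjugates $H_{\alpha,\Gamma^\varepsilon_0}$ and $H_{\alpha,\Gamma^{-\varepsilon}_0}$ unitarily, so $\tilde\RR^\kappa_{\alpha,\Gamma^\varepsilon_0}$ depends on $\varepsilon$ only through $\varepsilon^2$, whence $\kappa_0(\varepsilon)=\alpha/2+O(\varepsilon^2)$ and $\tilde\phi_0^\varepsilon=c_0+O(\varepsilon^2)$ in sup norm, with $c_0=\sqrt{\alpha/2}$. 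Passing to the variables $u$ and $w=x_1'$, the leading $w$-integral inside $D_n^{(1)}$ becomes
\[
c_0^2\int_\R[\tau(w+u)-\tau(w)]\,\D w = c_0^2\int_0^u\!\!\int_\R\tau'(w+s)\,\D w\,\D s = 0
\]
by assumption \eqref{aiii}, and Taylor-expanding $g_n(w+u)g_n(w)$ in $1/n^2$ (using that $\tau-\tau(\pm\infty)$ is compactly supported) yields $D_n^{(1)}=O(n^{-2})$, the $O(\varepsilon^2)$ corrections from $\tilde\phi_0^\varepsilon-c_0$ and $\kappa_0(\varepsilon)-\alpha/2$ being absorbed into the $O(\varepsilon^3)$ remainder of $D_n(\varepsilon)$. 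For $D_n^{(2)}$ the exponential decay of $K_0,K_1$ combined with the bound $\int_\R(\tau(w+u)-\tau(w))^2\D w\le u^2\|\tau'\|_{L^2}^2$ give, by dominated convergence, a strictly positive finite limit $I_\tau:=\lim_{n\to\infty}D_n^{(2)}>0$, uniform for $\varepsilon$ small.

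Combining, $D_n(\varepsilon)-M_n\ge\tfrac{\varepsilon^2}{4}I_\tau-C_1 n^{-2}-C_2\varepsilon n^{-2}-C_3\varepsilon^3$ for all $n$ large and $\varepsilon$ small; choosing $n=C/\varepsilon$ with $C$ large enough (depending on $C_1$ and $I_\tau$) turns the right-hand side into a positive multiple of $\varepsilon^2$, which is precisely \eqref{excondition}, and thus $\sigma_\mathrm{disc}(H_{\alpha,\Gamma^\varepsilon_{\varepsilon\tau}})\ne\emptyset$ by Proposition~\ref{prop:BS}(ii). The main obstacle is the uniformity of all error estimates under the joint scaling $n\sim 1/\varepsilon$---in particular the uniform control of the $O(\varepsilon^3)$ Taylor remainder on the effective support of the mollifier; the $C^2$-regularity of $\tau,\gamma$ is used to obtain these uniform pointwise bounds, while the condition $\tau'(0)=0$ allows one to centre $g_n$ at a critical point of the perturbation, slightly strengthening the cancellation that underlies the $O(n^{-2})$ estimate for $D_n^{(1)}$.
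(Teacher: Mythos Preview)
Your approach is correct in outline and genuinely different from the paper's. The paper does not Taylor-expand the Birman--Schwinger kernel in~$\varepsilon$; instead it argues that the composite map $x_1\mapsto K_0\big(\kappa\, z_\varepsilon(x_1)\big)$, with $z_\varepsilon(x_1)=\sqrt{(x_1+\varepsilon\tau(x_1))^2+\varepsilon^2\gamma(x_1)^2}$, is strictly convex for $\varepsilon$ small, and then applies an integral Jensen inequality (as in \cite[Lemma~4.5]{ES25}) to conclude positivity of \eqref{trial_lim2} directly, without splitting into orders. The convexity check reduces to the scalar inequality $z_\varepsilon z_\varepsilon''<(z_\varepsilon')^2$, which after expanding in~$\varepsilon$ becomes $x_1^2+2\varepsilon(\tau+\tau')+O(\varepsilon^2)>0$; this is where $C^2$ regularity and the condition $\tau'(0)=0$ enter in the paper's argument.

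Your route---kill the first-order term via assumption~\eqref{aiii} and read off positivity from the explicit second-order coefficient $\kappa_0^2K_0(\kappa_0|u|)+\kappa_0K_1(\kappa_0|u|)/|u|$---is more hands-on but makes the mechanism transparent: the gain is exactly the strict convexity of $K_0$ evaluated on the straight-line distance. Two points deserve tightening. First, the claim $\tilde\phi_0^\varepsilon=c_0+O(\varepsilon^2)$ in sup norm is asserted from the reflection symmetry but really requires a word on analytic perturbation of the fiber ground state of $H_{\alpha,\Gamma_0^\varepsilon}(0)$ (type~A family, simple eigenvalue) together with Sobolev embedding on the period cell. Second, the uniform-in-$n$ bound on the $O(\varepsilon^3)$ remainder near $u=0$ relies on $\Delta\tau=O(|u|)$ to tame the $|u|^{-3}$ behaviour of $K_0'''$; this needs only that $\tau$ be Lipschitz, not $C^2$. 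In fact your argument as written does not use $\tau'(0)=0$ at all (your final sentence is honest about this), and barely uses $\gamma\in C^2$---so if the details go through you have actually proved a slightly stronger statement than the paper does, the extra hypotheses being artifacts of the convexity route rather than intrinsic to the result. The paper's Jensen approach, on the other hand, has the advantage (noted in its Remark~\ref{rem:strong}) of extending more readily beyond the perturbative regime once one exploits the full inequality \eqref{convexcond}.
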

 %----------------%
\begin{proof}
The argument of the previous proof applies naturally again, hence one has to check whether
 % ------------- %
 \begin{align}
\int_{\R^2} \tilde\phi_0(x_1)\, & \big[K_0\big(\kappa_0|\Gamma^\varepsilon_{\varepsilon\tau}(s(x_1))-\Gamma^\varepsilon_{\varepsilon\tau}(s(x'_1))|\big) \nonumber \\ & - K_0\big(\kappa_0|\Gamma_0(s(x_1))-\Gamma_0(s(x'_1))|\big)\big]\, \tilde\phi_0(x'_1)\,\D x_1\D x'_1 > 0. \nonumber %\label{trial_lim3}
 \end{align}
 % ------------- %
holds. The difference of the two distances as a function of $x_1$ now changes sign, but one could still verify positivity of the integral as long as the function involved is convex using the integral version of Jensen's inequality. The trouble is that it may not be the case because the function $R_\kappa:\: R_\kappa(x_1)= K_0(\kappa z(x_1))$ is a map composed of the Macdonald function which is convex and $z(x_1)= \sqrt{(x_1 +\tau(x_1))^2 + \gamma(x_1)^2}$ which typically is not. To check under which condition $R_\kappa$ is (strictly) convex we proceed as in Lemma~4.5 of \cite{ES25}: evaluating $\frac{\D^2}{\D x_1^2} R_\kappa(x_1)$ and using recurrent relations between Macdonald functions \cite[9.6.26]{AS}, we arrive at the condition
 % ------------- %
 \begin{equation} \label{convexcond}
\frac{z''}{(z')^2} < \frac{1}{z} + \kappa \frac{K_0(\kappa z)}{K_1(\kappa z)}\,;
 \end{equation}
 % ------------- %
the second term on the right-hand side is positive, hence the strict convexity is guaranteed provided $zz''<(z')^2$. This may not be the case in general, however, using $z_\varepsilon(x_1)= \sqrt{(x_1 +\tau_\varepsilon(x_1))^2 + \gamma_\varepsilon(x_1)^2}$ and keeping the terms up to the first order in $\varepsilon$, we arrive at the inequality
 % ------------- %
 $$ %\begin{equation} \label{convexgentle}
x_1^2 +2\varepsilon\big(\tau(x_1)+ \tau'(x_1)\big) + \OO(\varepsilon^2) > 0
 $$ %\end{equation}
 % ------------- %
which is certainly satisfied independently of $x_1$ for all $\varepsilon$ small enough; recall that the support of $\tau$ is compact by assumption.
\end{proof}

 %----------------%
\begin{corollary} \label{cor:gentlecontr}
The claim of Theorem~\ref{thm:gentlecrit} remains valid if assumption \eqref{aiii} is replaced by \mbox{$\int_\R \tau'(x_1)\,\D x_1 < 0$.}
\end{corollary}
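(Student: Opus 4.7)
The plan is to reuse the Birman--Schwinger reduction from the proof of Theorem~\ref{thm:gentlecrit} together with the strict convexity estimate \eqref{convexcond} proved there: both are derived from the $C^2$ regularity of $\tau,\gamma$, the compact support of $\tau'$, and a Taylor expansion in $\varepsilon$, without any recourse to the zero-mean condition \eqref{aiii}. Consequently $R_{\kappa_0}$ remains strictly convex for all small $\varepsilon$, and it suffices to verify
\begin{equation*}
I_\varepsilon := \int_{\R^2}\tilde\phi_0(x_1)\bigl[K_0\bigl(\kappa_0|\Gamma^\varepsilon_{\varepsilon\tau}(s(x_1))-\Gamma^\varepsilon_{\varepsilon\tau}(s(x_1'))|\bigr)-K_0\bigl(\kappa_0|\Gamma_0(s(x_1))-\Gamma_0(s(x_1'))|\bigr)\bigr]\tilde\phi_0(x_1')\,\D x_1\D x_1'>0
\end{equation*}
for all $\varepsilon$ small enough.

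The distinctive new ingredient is that $\int_\R\tau'<0$ forces a strict net contraction of the curve at infinity: off the compact set $\mathrm{supp}\,\tau'$ the perturbed curve $\Gamma^\varepsilon_{\varepsilon\tau}$ coincides with $\Gamma_0$ rigidly translated in the $x_1$-direction by $\varepsilon\tau(-\infty)$ on the left and by $\varepsilon\tau(+\infty)$ on the right, the two translates differing by the vector $(\varepsilon\int_\R\tau',0)$ whose first component is strictly negative. Hence for every pair $(x_1,x_1')$ with $x_1>\sup\mathrm{supp}\,\tau'$ and $x_1'<\inf\mathrm{supp}\,\tau'$ one has the strict pointwise inequality
\begin{equation*}
|\Gamma^\varepsilon_{\varepsilon\tau}(s(x_1))-\Gamma^\varepsilon_{\varepsilon\tau}(s(x_1'))|<|\Gamma_0(s(x_1))-\Gamma_0(s(x_1'))|,
\end{equation*}
so by strict monotonicity of $K_0$ and positivity of $\tilde\phi_0$ such pairs yield a strictly positive $\OO(\varepsilon)$ contribution to $I_\varepsilon$. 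The remaining contribution from pairs meeting $\mathrm{supp}\,\tau'$ is likewise $\OO(\varepsilon)$ but of indefinite sign; the strict convexity of $R_{\kappa_0}$ controls its negative part, and for $\varepsilon$ small enough the tail contribution dominates, giving $I_\varepsilon>0$.

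I expect the main technical obstacle to be the rigorous quantitative comparison of these two $\OO(\varepsilon)$ contributions so as to rule out a cancellation inside $\mathrm{supp}\,\tau'$. A cleaner, more structural alternative is the decomposition $\tau=\tau_0+\tau_c$, where $\tau_c(x_1)=\bigl(\int_\R\tau'\bigr)h(x_1)$ for a fixed $C^2$ non-increasing $h$ with $h(-\infty)=1$, $h(+\infty)=0$, $h'$ compactly supported, and $h'(0)=0$, so that $\tau_0=\tau-\tau_c$ satisfies \eqref{aiii} together with $\tau_0'(0)=0$. Theorem~\ref{thm:contract} then applies to the strictly contractive piece $\tau_c$ and Theorem~\ref{thm:gentlecrit} to $\tau_0$, reducing the corollary to combining the two first-order variations of $I_\varepsilon$ additively in $\varepsilon$; checking this additivity is the heart of the argument along this route.
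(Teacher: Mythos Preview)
Your proposal is not yet a proof: both routes are left as sketches with gaps you yourself flag, and the second route contains a concrete error. With $h$ non-increasing from $1$ to $0$ one has $\int\tau_c'=(\int\tau')\int h'=(\int\tau')(0-1)=-\int\tau'$, so $\int\tau_0'=2\int\tau'\ne0$ and $\tau_0$ does \emph{not} satisfy \eqref{aiii}; moreover $\tau_c'=(\int\tau')h'$ is a product of two nonpositive quantities, hence $\tau_c'\ge0$, so $\tau_c$ is an expansion and Theorem~\ref{thm:contract} does not apply to it. The fix is to take $h$ non-decreasing from $0$ to $1$. As for the first route, both the ``tail'' and the ``support'' contributions to $I_\varepsilon$ are genuinely of the same order $\OO(\varepsilon)$, so the assertion that ``for $\varepsilon$ small enough the tail contribution dominates'' has no content without the quantitative comparison you acknowledge is missing; the convexity of $R_{\kappa_0}$ by itself gives nothing once the zero-mean hypothesis is dropped, since Jensen's inequality is what consumed \eqref{aiii} in Theorem~\ref{thm:gentlecrit}.

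The paper's argument is close to your (corrected) second route but bypasses the ``additivity'' issue entirely. One picks a zero-mean $\upsilon$ with $\upsilon'\ge\tau'$ (your corrected $\tau_0$ works), and telescopes at the level of the Birman--Schwinger quadratic form by adding and subtracting $\big(g_n\tilde\phi_0,\tilde\RR^{\kappa_0}_{\alpha,\Gamma^\varepsilon_{\varepsilon\upsilon}}g_n\tilde\phi_0\big)$. The piece $\Gamma^\varepsilon_{\varepsilon\upsilon}$ versus $\Gamma_0$ is strictly positive for small $\varepsilon$ by Theorem~\ref{thm:gentlecrit}. For the piece $\Gamma^\varepsilon_{\varepsilon\tau}$ versus $\Gamma^\varepsilon_{\varepsilon\upsilon}$, the condition $(\tau-\upsilon)'\le0$ gives the \emph{pointwise} distance inequality $|\Gamma^\varepsilon_{\varepsilon\tau}(s(x_1))-\Gamma^\varepsilon_{\varepsilon\tau}(s(x_1'))|\le|\Gamma^\varepsilon_{\varepsilon\upsilon}(s(x_1))-\Gamma^\varepsilon_{\varepsilon\upsilon}(s(x_1'))|$ for every pair, so by monotonicity of $K_0$ and positivity of $\tilde\phi_0$ this piece is $\ge0$ --- exactly the mechanism of Theorem~\ref{thm:contract}, but comparing two perturbed curves rather than a perturbed curve to $\Gamma_0$. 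No first-order expansion in $\varepsilon$ and no additivity of variations are needed; the key idea you are missing is to compare the curves, not to decompose the perturbation.
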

 %----------------%
\begin{proof}
To such a $\tau$ one can find an $\upsilon\ge\tau$ of zero mean for which $\sigma_\mathrm{disc}\big(H_{\alpha,\Gamma^\varepsilon_{\varepsilon\upsilon}}\big)$ is nonempty provided $\varepsilon$ is small enough. We use that same trial function $g_n\tilde\phi_0$ with $n$ sufficiently large. To the difference in question,
 % ------------- %
 \begin{equation} \label{trial_diff2}
\big(g_n\tilde\phi_0, \tilde\RR^{\kappa_0}_{\alpha,\Gamma^\varepsilon_{\varepsilon\tau}}g_n\tilde\phi_0\big) - \big(g_n\tilde\phi_0, \tilde\RR^{\kappa_0}_{\alpha,\Gamma_0}g_n\tilde\phi_0\big)
 \end{equation}
 % ------------- %
we add and subtract $\big(g_n\tilde\phi_0, \tilde\RR^{\kappa_0}_{\alpha,\Gamma^\varepsilon_{\varepsilon\upsilon}}g_n\tilde\phi_0\big)$. One of the two differences obtained is positive for small $\varepsilon$ by the previous proof, while the remaining one
 % ------------- %
 $$ %\begin{equation} \label{trial_diff3}
\big(g_n\tilde\phi_0, \tilde\RR^{\kappa_0}_{\alpha,\Gamma^\varepsilon_{\varepsilon\tau}}g_n\tilde\phi_0\big) - \big(g_n\tilde\phi_0,  \tilde\RR^{\kappa_0}_{\alpha,\Gamma^\varepsilon_{\varepsilon\upsilon}}g_n\tilde\phi_0\big),
 $$ %\end{equation}
 % ------------- %
can be treated as in the proof of Theorem~\ref{thm:contract}; in view of the inequality $\upsilon\ge\tau$ it is nonnegative, irrespective of $\varepsilon$, hence the difference \eqref{trial_diff2} is positive for $\varepsilon$ small enough.
\end{proof}

 % ------------- %
\begin{remark} \label{rem:strong}
These results are certainly far from optimal. One reason is the rough estimate we used in \eqref{convexcond} where we have neglected the positive second term on the right-hand side. The latter is not larger than $\kappa$ as mistakenly indicated in \cite[Rem.~4.6]{ES25} but approaches this value as $\kappa z\to\infty$, hence the claim made there -- that the geometric restrictions become weaker if the interaction is strong -- remains valid. The same is expected here, however, instead of working out this argument, we will present an example showing that a `critical' deformation can give rise to a nonempty discrete spectrum even if it is not weak, and neither is the undulation of $\Gamma$.
\end{remark}
 % ------------- %

%%%%%%%%%%%%%%%%%%%%%%%%%%%%%%%%%%%%%%%%%%%%
\section{The perturbation may not be gentle}
\label{s:example}

Before coming to the message of this section announced in the title we will present an auxiliary result which could be of independent interest. Let us first describe the setting. We consider a positive, compactly supported function $V\in L^2(\R)$ with $\mathrm{supp}\,V\subset\Sigma_b:=[-\frac12b,\frac12b]$ for some $b>0$. Furthermore, let $X_0=\{na\}_{n\in\Z}$ be a periodic array with $a>b$ and $X_\delta=\{na+\delta_n\}_{n\in\Z}$ be a sequence obtained by shifting a \emph{finite} number of the array points; we denote $x_n=na+\delta_n$. We do not impose restrictions on the \emph{individual} $\delta_n$'s but we require that $\delta_{n+1}-\delta_n > -b$, so that the perturbation does not change the order of the points and, in addition, we have $x_{n+1}-x_n> a-b$. The object of our interest is the one-dimensional Schr\"odinger operator
 % ------------- %
 $$ %\begin{equation} \label{1DHam}
H_{V,X_\delta} = -\frac{\D^2}{\D x^2} - V_\delta(x), \quad V_\delta = \sum_{n\in\Z} V(x-x_n),
 $$ %\end{equation}
 % ------------- %
describing motion in the array of potential wells which do not overlap by assumption; it is a self-adjoint operator bounded from below with the domain $H^2(\R)$ because the potential belongs to Kato class $K_1$ \cite[Thm.~A.2.7]{Si82}. It is well known that the spectrum of $H_{V,X_0}$ is purely essential and the compactly supported perturbation $V_\delta-V_0$ does not change its essential component, and it gives rise to \emph{at most} finite number of eigenvalues in each spectral gap of $\sigma\big(H_{V,X_0}\big)$ \cite{RB73, GS93}. Our question is whether such an eigenvalue does indeed exist in the lowest gap, i.e. below the spectral threshold $\epsilon_0$ of $H_{V,X_0}$; the answer is affirmative:

 %----------------%
\begin{theorem} \label{thm:1Ddiscrete}
In the described situation, $\inf\sigma\big(H_{V,X_\delta}\big) < \epsilon_0$ unless $\delta_n=0$ for all $n\in\Z$, in other words, any nontrivial perturbation of the described type creates at least one eigenvalue below $\inf\sigma_\mathrm{ess}\big(H_{V,X_\delta}\big)$.
\end{theorem}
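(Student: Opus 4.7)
The plan is to mirror the Birman--Schwinger strategy of Theorem \ref{thm:contract}, adapted to the 1D regular-potential setting. For $\kappa > 0$ introduce the BS operator $\mathcal{B}^\kappa_\delta := V_\delta^{1/2}(-\partial_x^2 + \kappa^2)^{-1}V_\delta^{1/2}$ on $L^2(\R)$, whose kernel is $V_\delta^{1/2}(x)\,\tfrac{1}{2\kappa}\e^{-\kappa|x-y|}\,V_\delta^{1/2}(y)$. The standard BS principle gives $1 \in \sigma_p(\mathcal{B}^\kappa_\delta) \Leftrightarrow -\kappa^2 \in \sigma_p(H_{V,X_\delta})$, and $\mathcal{B}^\kappa$ is positive and monotonically norm-decreasing in $\kappa$ with $\|\mathcal{B}^\kappa\|\to 0$ as $\kappa\to\infty$. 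Since $V_\delta - V_0$ is compactly supported, $\mathcal{B}^\kappa_\delta - \mathcal{B}^\kappa_0$ is a compact operator, so $\sigma_\mathrm{ess}(\mathcal{B}^{\kappa_0}_\delta) = \sigma_\mathrm{ess}(\mathcal{B}^{\kappa_0}_0)$ at the threshold $\kappa_0 := \sqrt{-\epsilon_0}$, and $\sup\sigma_\mathrm{ess}(\mathcal{B}^{\kappa_0}_0) = 1$ is the usual generalized-eigenvalue condition at the band edge. It therefore suffices to produce $u\in L^2(\R)$ with $\langle u, \mathcal{B}^{\kappa_0}_\delta u\rangle > \|u\|^2$: such a $u$ forces a discrete eigenvalue of $\mathcal{B}^{\kappa_0}_\delta$ strictly above $1$, which by monotonicity in $\kappa$ crosses $1$ at some $\kappa_1 > \kappa_0$, delivering the sought bound state at energy $-\kappa_1^2 < \epsilon_0$.

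For the trial vector I would take the analog of the mollified generalized BS eigenvector of the proof of Theorem \ref{thm:contract}, properly adapted to the shifted wells. Define $\phi_\delta$ on $\mathrm{supp}\,V_\delta$ by $\phi_\delta(x) = \psi_0(x - \delta_k)$ when $x$ lies in the $k$-th perturbed well $[x_k - b/2,\,x_k + b/2]$, and set $u_\delta := V_\delta^{1/2}\phi_\delta$. Passing to the local coordinate $y = x - x_k$ in each well, the periodicity of $\psi_0$ collapses $u_\delta$ to the common profile $\sqrt{V(y)}\,\psi_0(y)$ independently of $k$. Then multiply by a slow mollifier $g_m(x) = g(x/m)$ as in Section \ref{s:discrete} and compute $\langle g_m u_\delta, \mathcal{B}^{\kappa_0}_\delta\, g_m u_\delta\rangle - \|g_m u_\delta\|^2$ via the commutator trick. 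The mollifier contribution is $\OO(m^{-1})$, and the main term reduces, upon passing $m\to\infty$, to
\[
\Delta := \sum_{k,\ell}\bigl[F(x_k - x_\ell) - F((k-\ell)a)\bigr], \quad F(w) := \iint h(u)h(v)\,\tfrac{1}{2\kappa_0}\e^{-\kappa_0|u-v+w|}\,\D u\,\D v,
\]
with $h = V\psi_0$. This sum is effectively finite (its summand vanishes whenever $\delta_k = \delta_\ell = 0$), and the threshold condition $\sum_{j\in\Z}F(ja) = \int V\psi_0^2$, which is the Lippmann--Schwinger identity for $\psi_0$ at the band edge, ensures that $\Delta = 0$ in the unperturbed case.

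The decisive step is the strict positivity $\Delta > 0$ for any nontrivial $\{\delta_k\}$. The kernel $F$ is positive-definite with $\hat F(\xi) = |\hat h(\xi)|^2/(\xi^2 + \kappa_0^2) \ge 0$, strictly decreasing in $|w|$, and strictly convex on $\{|w| > b\}$ since $F'' = \kappa_0^2 F > 0$ there (the autocorrelation $h*\tilde h$ being supported in $[-b,b]$). I expect positivity of $\Delta$ to follow from the rigidity of the periodic array as the unique minimizer, modulo translation, of the pair-interaction functional $\{x_k\}\mapsto \sum_{k,\ell}F(x_k - x_\ell)$ among unit-density configurations obtained from $\{ka\}$ by a compactly-supported density-preserving rearrangement. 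In the Fourier picture, writing $S_\delta(\xi) = \sum_k\e^{i\xi x_k}$ and $\tilde S = S_\delta - S_0$, one has $\Delta = 2\,\mathrm{Re}\int \hat F\, \bar S_0\,\tilde S\,\D\xi/(2\pi) + \int\hat F|\tilde S|^2\,\D\xi/(2\pi)$; the first term reduces by Poisson summation to $\tfrac{2}{a}\sum_m \hat F(2\pi m/a)\sum_k[\cos(2\pi m\delta_k/a) - 1]\le 0$, while the second is manifestly $\ge 0$, and the pair $(\{\delta_k\}=0)$ is the unique instance of equality in both.

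The main obstacle I anticipate is precisely this general positivity of $\Delta$ for non-infinitesimal shifts, i.e.\ showing that the positive quadratic term strictly dominates the non-positive cross term whenever $\{\delta_k\}$ is nontrivial. For small $\{\delta_k\}$ a second-order expansion makes the dominance explicit via $F''(\ell a) > 0$ for $\ell\neq 0$. For the general case I would try either a deformation/continuity argument, parametrising $\{x_k(t)\} = \{ka + t\delta_k\}$ and tracking the top discrete eigenvalue of $\mathcal{B}^{\kappa_0}_{t\delta}$ as $t$ increases from $0$, or a direct convexity argument exploiting the strict convexity of $F$ at all the relevant pair separations $|x_k - x_\ell|\ge a - b > 0$; this rigidity of the periodic point array under weighted pair interactions appears to be the only substantive input needed beyond what is already used in Theorem \ref{thm:contract}.
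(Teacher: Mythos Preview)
Your scaffold matches the paper's proof almost exactly: Birman--Schwinger, the well-by-well transplanted trial vector $u_\delta=V_\delta^{1/2}\phi_\delta$, a mollifier, and the reduction to
\[
\Delta=\sum_{k,\ell}\bigl[F(x_k-x_\ell)-F((k-\ell)a)\bigr],\qquad
F(w)=\iint h(\xi)h(\xi')\,\tfrac{1}{2\kappa_0}\e^{-\kappa_0|w+\xi-\xi'|}\,\D\xi\,\D\xi',
\]
with $h=V\psi_0$. The paper arrives at literally the same expression (its display \eqref{1Dexplicit}). The genuine gap is precisely where you flag it: your Fourier/Poisson splitting into a nonpositive cross term and a nonnegative quadratic term does not show $\Delta>0$ for finite shifts, and neither the small-$\delta$ expansion nor a deformation argument is carried out. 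The paper closes this with the elementary convexity step you mention last but do not execute.

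Here is that step. Do not pass to $F$; stay with $R_\kappa(z)=\tfrac{1}{2\kappa}\e^{-\kappa|z|}$. For fixed $\xi,\xi'\in\Sigma_b$ and each $m\in\Z\setminus\{0\}$ set $c_m=ma+\xi-\xi'$ and $\epsilon_i=\delta_i-\delta_{i-m}$. Only finitely many $\epsilon_i$ are nonzero and $\sum_i\epsilon_i=0$ by telescoping. The non-overlap hypothesis guarantees that for $m>0$ all the arguments $c_m+\epsilon_i=x_i-x_{i-m}+\xi-\xi'$ as well as $c_m$ itself lie in $(0,\infty)$, on which $R_\kappa$ is strictly convex; for $m<0$ they lie in $(-\infty,0)$. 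The tangent-line inequality $R_\kappa(c_m+\epsilon_i)\ge R_\kappa(c_m)+R_\kappa'(c_m)\,\epsilon_i$ summed over $i$ gives the $m$-block $\ge 0$, strictly unless every $\epsilon_i=0$. For $m=\pm1$ this fails only when all $\delta_i$ coincide, hence vanish. Integrating against the positive weight $V(\xi)\psi_0(\xi)V(\xi')\psi_0(\xi')$ yields $\Delta>0$. No Fourier, no rigidity principle, no smallness of the $\delta_k$.

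A minor technical point: a generic compactly supported $g(x/m)$ does not obviously give an $\OO(m^{-1})$ commutator remainder, because the boundary layer of $\mathrm{supp}\,g_m$ contributes $\OO(1)$. The paper avoids this by taking the step-function mollifier with the Lorentzian envelope $h_{n,i}=n^2/(n^2+i^2)$ constant on each well, for which the commutator is $\OO(n^{-2})$; you should adopt that choice.
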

 %----------------%
\begin{proof}
The argument follows the same line as in the multidimensional case \cite[Thm.~4.2]{ES25} but to make the paper self-contained, we present it skipping just a few details. The question is by Birman-Schwinger principle translated into analysis of the operator $K_{V,X_\delta}(-\kappa^2):= V_\delta^{1/2} \big(-\frac{\D^2}{\D x^2} +\kappa^2\big)^{-1} V_\delta^{1/2}$; in analogy with Proposition~\ref{prop:BS2} one can check that it a bounded positive operator, monotonous with respect to $\kappa$ and satisfying $\lim_{\kappa\to\infty} \|K_{V,X_\delta}(-\kappa^2)\|=0$. Since the potential well supports do not overlap, we may regard $K_{V,X_\delta}(-\kappa^2)$ as an infinite-matrix integral operator the entries of which map of $L^2(\Sigma_b+x_j)$ to $L^2(\Sigma_b+x_i)$ with the kernels
 % ------------- %
 \begin{equation} \label{1Dkern}
K_{V,X_\delta}^{(i,j)}(-\kappa^2;x,x') = \frac{1}{2\kappa}\,V_\delta(x)^{1/2}\, \e^{-\kappa|x-x'|}\, V_\delta(x')^{1/2}, \quad i,j\in\Z,
 \end{equation}
 % ------------- %
where $\Sigma_b+y:= \{\xi+y:\, \xi\in\Sigma_b\}$. As in the proof of Theorem~\ref{thm:contract}, to verify the claim one has to demonstrate that $\sup\sigma\big(K_{V,X_\delta}(-\kappa_0^2)\big)>1$ holds for $\kappa_0:= \sqrt{-\epsilon_0}$. The trial function we need is constructed from the generalized eigenfunction $\phi_0$ associated with $\sup\sigma\big(K_{V,X_0}(-\kappa_0^2)\big)$ which is nothing but $V_0^{1/2}\psi_0$ where $\psi_0$, is the (positive) generalized eigenfunction of $-\frac{\D^2}{\D x^2} - V_0(x)$. The function $\phi_0$ is $a$-periodic and, with an abuse of notation, we employ the same symbol for the function
 % ------------- %
 $$ %\begin{equation} \label{1Dtrial}
\{\phi_{0,j}\} \in \sum_{j\in\Z}\!\raisebox{1.5ex}{{\scriptsize $\oplus$}}\: L^2(\Sigma_b+x_j), \quad \phi_{0,j}(\xi+x_j):=\phi_0(\xi)\;\;\text{for}\;\, \xi\in \Sigma_b
 $$ %\end{equation}
 % ------------- %
(locally aperiodic when used in expressions where the operator $K_{V,X_\delta}(-\kappa^2)$ is applied to it!). To make it an $L^2(\R)$ element, we again need mollifiers; a suitable family consists of step functions,
 % ------------- %
 \begin{equation} \label{1Dkern}
h_n(x) = \sum_{i\in\Z} h_{n,i} \chi_{\Sigma_b+x_i}(x), \quad h_{n,i} = \frac{n^2}{n^2+i^2}, \quad i\in\Z\,.
 \end{equation}
 % ------------- %
Since in view of \eqref{1Dkern} $|\big(\phi_{0,i},K_{V,X_0}(-\kappa_0^2)\phi_{0,j}\big)| \le c\,\e^{-\kappa a |i-j|}$ holds for some $c>0$, one can mimick the proof of Lemma~4.3 of \cite{ES25} and conclude that
 % ------------- %
$$ %\begin{equation} \label{1Dmollif2}
(h_n\phi_0, K_{V,X_0}(-\kappa_0^2)h_n\phi_0) - \|h_n\phi_0\|^2=\OO(n^{-2})\;\; \text{as}\;\; n\to\infty\,;
$$ %\end{equation}
 % ------------- %
using then a telescopic estimate as in the proof of Theorem~\ref{thm:contract} we see that to get the claim, we have check that
 % ------------- %
 $$ %\begin{equation} \label{!D negtivity}
\lim_{n\to\infty} (h_n\phi_0, K_{V,X}(-\kappa^2)h_n\phi_0) - (h_n\phi_0, K_{V,X_0}(-\kappa^2)h_n\phi_0) > 0
$$ %\end{equation}
 % ------------- %
holds for $\kappa=\kappa_0$ (or even for any $\kappa>0$). As in the multidimensional case one can justify the use of dominated convergence theorem by which the problem reduces to checking positivity of the series
 % ------------- %
 $$ %\begin{equation} \label{1Dsum}
\sum_{i,j\in\Z}\big(\phi_0, \big[K_{V,X}^{(i,j)}(-\kappa^2) - K_{V,X_0}^{(i,j)}(-\kappa^2)\big]\phi_0\big),
$$ %\end{equation}
 % ------------- %
which we can using \eqref{1Dkern} rewrite explicitely as
 % ------------- %
 \begin{align}
\int_{\Sigma_b\times\Sigma_b} \phi_0(\xi) V^{1/2}(\xi) \sum_{i,j\in\Z} & \big[ R_\kappa(x_i-x_j +\xi-\xi') - R_\kappa((i-j)a+\xi-\xi') \big] \nonumber \\[.3em] & \times V^{1/2}(\xi') \phi_0(\xi') \,\D\xi\,\D\xi', \label{1Dexplicit}
\end{align}
 % ------------- %
where $R_\kappa(z)=\frac{1}{2\kappa}\,\e^{-\kappa|z|}$. The first term argument in the square bracket is $(i-j)a +\delta_i-\delta_j +\xi-\xi'$ with the sum of the shift terms over the indices $i,j$ being obviously zero, and since $R_\kappa(\cdot)$ is strictly convex, the bracket is positive unless all the $\delta_i$ are zero; this concludes the proof.
\end{proof}

 % ------------- %
\begin{remark} \label{rem:1versN}
Note that the result is stronger than its multidimensional analogue \cite[Thm.~4.2]{ES25} because the shift variables enter the argument of \eqref{1Dexplicit} directly and not through the composition with a concave function representing the Euclidean distance.
\end{remark}
 % ------------- %

After this preliminary let us turn to the message of this section:
 % ------------- %
\begin{example} \label{ex:strong}
Let us denote $\Sigma_a:=[-\frac12a,\frac12a]$ and consider a curve $\Gamma_0$ such that $|\mathrm{supp}\,\gamma\upharpoonright_{\Sigma_a}\!|<a$, in other words, its curved parts are separated by straight segments. Consider further a perturbed curve $\Gamma_\tau$ with the property that $\mathrm{supp}\,\tau \cap \mathrm{supp}\,\gamma = \emptyset$ which means that the shape of the curved parts does not change but their distances are locally modified, cf.~Fig~\ref{fig:localpert}; without loss of generality we may suppose that $\tau(\cdot)$ is a compactly supported step function assuming a finite number of nonzero values outside $\mathrm{supp}\,\gamma$.
 % ------------- %
 \begin{figure}[t]
 \centering
 \includegraphics[clip, trim=4cm 22cm 4cm 4cm, angle=0, width=\textwidth]{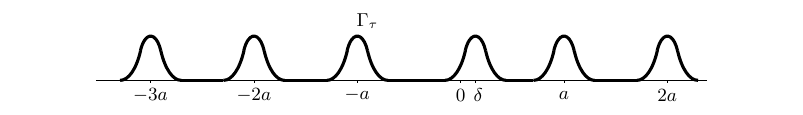}
 \caption{The interaction support $\Gamma_\tau$ in the example with one curved segment shifted}
 \label{fig:localpert}
 \end{figure}
 % ------------- %

In addition to \eqref{ai}--\eqref{aiii}, assume now that $\gamma\in C^6(\R)$ and the coupling parameter $\alpha$ in \eqref{Hform} is large. In that case, one can use the strong coupling asymptotics of the singular Schr\"odinger operators \eqref{formal} defined through the quadratic form \eqref{Hform}, cf.~\cite[Sec.~4]{Ex08}. The said asymptotics is worked out there for several classes of curves including the periodic ones; their local perturbation are not included but the technique on which the results are based, a bracketing in combination with the min-max principle, can be applied to our present problem, an eigenvalue below the bottom of the essential spectrum, as well: the conclusion is that the low-lying spectrum of $H_{\alpha,\Gamma}$ coincides, up to an error of order $\OO(\alpha^{-1}\ln\alpha)$, with spectrum of the one-dimensional Schr\"odinger operator
 % ------------- %
 \begin{equation} \label{strong_asymp}
-\frac{\D^2}{\D s^2} -\frac14\alpha^2 - \frac14 k^2(s),
\end{equation}
 % ------------- %
where $k(\cdot)$ is the signed curvature of $\Gamma$. Up to the overall spectral shift $-\frac14\alpha^2$, the operator \eqref{strong_asymp} is of the type considered in Theorem~\ref{thm:1Ddiscrete} above; note the potential modification there is compactly supported which is a requirement equivalent to assumption~\eqref{aiii}. Moreover, the curvature-induced potential is easily computed,
 % ------------- %
 $$ %\begin{equation} \label{curvpotential}
- \frac14 k^2(s) = - \frac{\gamma''(x)^2}{4(1+\gamma'(x)^2)^3},
$$ %\end{equation}
 % ------------- %
so that it is $C^4$ satisfying thus the regularity property needed for the bracketing method reviewed in \cite{Ex08}. This allows us to conclude that the discrete spectrum $H_{\alpha,\Gamma}$ below the threshold of $\sigma_\mathrm{ess}\big(H_{\alpha,\Gamma}\big)$ is in this case nonempty for all coupling strengths $\alpha$ large enough.
\end{example}
 % ------------- %

%%%%%%%%%%%%%%%%%%%%%%%%%%%%
\section{Concluding remarks}
\label{s:concl}

As is usually the case, a resolved problem opens further questions, and for sure not only technical ones like replacing the compactly supported perturbations by those with an appropriate decay. For instance,  $\sigma\big(H_{\alpha,\Gamma_0}\big)$ may have gaps -- in fact, this happens for all sufficiently large $\alpha$, cf.~\cite{Yo98} -- and one is interested in the perturbation-induced discrete spectrum in them.

Other natural question concern sufficient conditions for the absence of the discrete spectrum -- at the bottom this may be a perturbation opposite to that of Theorem~\ref{thm:contract}, a local expansion of the curve -- or the three-dimensional analogue of the present problem in which the interaction term is no longer of additive type \cite[Sec.~2.4]{Ex08}. For the moment, however, we stop and leave these and other questions to a future research.

%%%%%%%%%%%%%%%%%%%%%%%%%%%%%%
\subsection*{Acknowledgements}
Thanks are due to the referee for helpful remarks.


\begin{thebibliography}{99}

 % -------------- %
\bibitem[AS]{AS}
M.S.~Abramowitz, I.A.~Stegun, eds.: \emph{Handbook of Mathematical Functions}, Dover, New York 1965.
 % -------------- %
\bibitem[AF03]{AF03}
R.A.~Adams, J.J.F.~Fournier: \emph{Sobolev Spaces}, 2nd ed., Academic Press, New York, 2003.
 % -------------- %
\bibitem[BEG22]{BEG22}
J.~Behrndt, A.F.M. ter Elst, F.~Gesztesy: The generalized Birman-Schwinger principle, \emph{Trans. Am. Math. Soc.} \textbf{375} (2022), 799--845.
 % -------------- %
\bibitem[BDE03]{BDE03}
F.~Bentosela, P.~Duclos, P.~Exner: Absolute continuity in periodic thin tubes and strongly coupled leaky wires, \emph{Lett. Math. Phys.} \textbf{65} (2003), 75-82.
% -------------- %
\bibitem[BEK\v{S}94]{BEKS94}
J.F.~Brasche, P.~Exner, Yu.A.~Kuperin, P.~\v Seba: Schr\" odinger operators with singular interactions, \emph{J. Math. Anal. Appl.} \textbf{184} (1994), 112-139.
 % ------------- %
\bibitem[Ex08]{Ex08}
P. Exner: Leaky quantum graphs: a review,  in ``Analysis on Graphs and Applications'' \emph{AMS ``Proceedings of Symposia in Pure Mathematics'' Series}, vol. 77, Providence, R.I., 2008; pp. 523--564.
 % -------------- %
\bibitem[EI01]{EI01}
P.~Exner, T.~Ichinose: Geometrically induced spectrum in curved leaky wires, \emph{J. Phys. A: Math. Gen.} \textbf{34} (2001), 1439--1450.
 % -------------- %
\bibitem[ES25]{ES25}
P.~Exner, D. Spitzkopf: Local perturbations of potential well arrays, \emph{J. Spect. Theory}, to appear. \texttt{arXiv:2504.04080 [math.SP]}
 % -------------- %
\bibitem[GS93]{GS93}
F.~Gesztesy, B.~Simon: A short proof of Zheludev's theorem, \emph{Trans. Am. Math. Soc.} \textbf{335} (1991), 329--340.
 % -------------- %
\bibitem[Ka76]{Ka76}
T.~Kato: \emph{Perturbation Theory for Linear Operators}, 2nd edition, Springer, Berlin 1976.
 % -------------- %
\bibitem[Pu11]{Pu11}
A.~Pushnitski: The Birman-Schwinger principle on the essential spectrum, \emph{J. Funct. Anal.} \textbf{261} (2011), 2053--2081.
 % -------------- %
\bibitem[RS78]{RS78}
M.~Reed, B.~Simon: \emph{Methods of Modern Mathematical Physics, IV.~Analysis of Operators}, Academic Press, New York 1978.
 % -------------- %
\bibitem[RB73]{RB73}
F.S.~Rofe-Beketov: Perturbation of a Hill operator having a first moment and nonzero integral creates one discrete level in distant spectral gaps, \emph{Mat. Fizika i Funkts. Analiz. (Khar'kov)} \textbf{19} (1973), 158--159 (in Russian).
 % -------------- %
\bibitem[Se71]{Se71}
Z.~Semadeni: \emph{Banach Spaces of Continuous Function}, Monografie Matematyczne, vol.~55, PWN, Warsaw 1971.
 % -------------- %
\bibitem[Si82]{Si82}
B.~Simon: Schr\"odinger semigroups, \emph{Bull. Am. Math. Soc.} \textbf{7} (1982), 447--526.
 % -------------- %
\bibitem[Si18]{Si18}
B.~Simon: Tosio Kato's work on non-relativistic quantum mechanics: part~1, \emph{Bull. Math. Sci.} \textbf{8} (2018), 121--232.
 % -------------- %
 \bibitem[Yo98]{Yo98}
K.~Yoshitomi: Band gap  of the spectrum in periodically curved quantum waveguides, \emph{J.~Diff. Eqs} \textbf{142} (1998), 123-166.
 % -------------- %

\end{thebibliography}
\end{document}